\newcommand{\Gal}{\operatorname{Gal}}
\newcommand{\p}{\mathfrak{p}}
\newcommand{\Z}{\mathbf{Z}}
\newcommand{\NN}{\mathbf{N}}
\newcommand{\R}{\operatorname{\mathbf{R}}}
\newcommand{\Q}{\mathbf{Q}}
 \newcommand{\NP}{\mathrm{NP}}
\newcommand{\ddef}{\colonequals}
\newcommand{\disc}{\operatorname{disc}}
\numberwithin{equation}{subsection}
\theoremstyle{plain}
\newtheorem{thm}[equation]{Theorem}
\newtheorem{lem}[equation]{Lemma}
\newtheorem{defn}[equation]{Definition}
\newtheorem{cor}[equation]{Corollary}
\newtheorem{prop}[equation]{Proposition}
\theoremstyle{remark}
\newtheorem{rmk}[equation]{Remark}
\begin{document}

\title{On the arithmetic of Pad\'e approximants to the exponential function}

\author{John Cullinan}
\address{Department of Mathematics, Bard College, Annandale-On-Hudson, NY 12504, USA}
\email{cullinan@bard.edu}
\urladdr{\url{http://faculty.bard.edu/cullinan/}}


\author{Nick Scheel}
\address{Department of Mathematics, Bard College, Annandale-On-Hudson, NY 12504, USA}
\email{ns5044@bard.edu}

\begin{abstract}
The $(u,v)$-Pad\'e approximation to a function $f$ is the (unique, up to scaling) rational approximation $f(x) = P(x)/Q(x) + O(x^{u+v+1})$, where $P$ has degree $u$ and $Q$ has degree $v$.  Motivated by recent  work of Molin, Pazuki, and Rabarison, we study the arithmetic of the Pad\'e approximants of the  exponential polynomials.  By viewing the approximants as certain Generalized Laguerre Polynomials, we determine the Galois groups of the diagonal approximants and prove some special cases of irreducibility.
\end{abstract}

\keywords{Galois Theory, Orthogonal Polynomials, Irreducibility}

\maketitle


\section{Introduction} \label{intro}

In this short paper we take up a question raised in \cite{pade} on the arithmetic of the Pad\'e approximants to a given function.  We immediately restrict ourselves to approximants of the truncated exponential polynomials in the hope that our methods prove useful to other, well-known examples.  Looking further, one might reasonably ask for a general relationship between the irreducibility and Galois groups of an irreducible polynomial and its Pad\'e approximants.

We begin by recalling the notion of the Pad\'e approximation.  Let $f \in \Q[x]$ be a polynomial of degree $n \geq 1$, let $\NN = \Z_{\geq 1}$, $(u,v) \in \NN \times \NN$, and recall that the $(u,v)$-Pad\'e approximation to $f$ is the (unique, up to scaling) rational function $R \in \Q(x)$ such that 
$$
f(x) = R(x) = P(x)/Q(x) + O(x^{u+v+1}),
$$
where $P(x)$ has degree $\leq u$, $Q(x)$ has degree $\leq v$ and $u+v \leq n$.  

The normalized, truncated exponential polynomial $e_n(x)$ is given by
$$
e_n(x) = n!\sum_{j=0}^n \frac{x^j}{j!},
$$
and, for $(u,v) \in \NN \times \NN$, has well known, normalized, Pad\'e approximants 
\begin{align} \label{first_pade}
P(u,v,x) &= \sum_{j=0}^u \frac{(u+v-j)!}{v!} \binom{u}{j} x^j, \text{and} \\
Q(u,v,x) &= \sum_{j=0}^v \frac{(u+v-j)!}{u!} \binom{v}{j} (-x)^j;
\end{align}
see \cite{iserles} for a statement and derivation. In particular, we observe that 
\begin{align} \label{identity}
P(u,v,x) = Q(v,u,-x).
\end{align}
In this paper we study the families $P(u,v,x)$ and $Q(u,v,x)$ for their Galois groups and their irreducibility.  Because of the identity (\ref{identity}), it will suffice to work with the family of $P(u,v,x)$.

To the best of our knowledge, the study of the arithmetic of Pad\'e approximants was first proposed in the recent preprint \cite{pade}, with the exponential polynomials as one of the guiding examples.  Because of the beautiful arithmetic properties of the exponential polynomial (see \cite{coleman} for an overview), it makes sense to use them as a test case.  Other families of polynomials -- the Legendre, Laguerre, and Jacobi, for example -- have conjecturally-similar properties to the exponential, but global results about their irreducibility and Galois groups are not as precisely known.

We can reframe this entire setup in terms of the Generalized Laguerre Polynomials and appeal to known results to sort out what is left to prove.  We begin by recalling the definition of the Generalized Laguerre Polynomials (GLP):
\begin{align} \label{glp}
L_n^{(\alpha)}(x) \ddef (-1)^n\sum_{j=0}^n \binom{n+\alpha}{n-j} \frac{(-x)^j}{j!}.
\end{align}
We adopt the linear change of variables and scaling introduced in \cite[(1.4)]{hajir2}:
\begin{align} \label{LL}
\mathcal{L}_n^{\langle r \rangle}(x) \ddef n! L_n^{(-1-n-r)}(x) = \sum_{j=0}^n \binom{n}{j} (r+1)(r+2)\cdots (r+n-j) x^j.
\end{align}
Viewed in this way, it is easy to verify that $e_n(x) = \mathcal{L}_n^{\langle 0 \rangle}(x)$. Moreover, by comparing the coefficients of (\ref{first_pade}) to (\ref{LL}), we see further that
\begin{align*}
P(u,v,x) &= \mathcal{L}_u^{\langle v \rangle}(x), \text{ and} \\
Q(u,v,x) &= \mathcal{L}_v^{\langle u \rangle}(-x).
\end{align*}
Therefore, the $(u,v)$-Pad\'e approximation to $e_n(x)$ is simply the identity
\begin{align} \label{pade_exp}
\mathcal{L}_n^{\langle 0 \rangle}(x) = \frac{\mathcal{L}_u^{\langle v \rangle}(x)}{\mathcal{L}_v^{\langle u \rangle}(-x)} + O(x^{u+v+1}),
\end{align}
up to scaling by a rational number. Therefore, all questions about the arithmetic of the Pad\'e approximants to $e_n(x)$ can be reframed as questions about the GLP.  

The main questions that we are interested in center around irreducibility and Galois groups.  With that in mind, there has already been a good deal of work done surrounding these properties, both for the exponential polynomials and for the GLP.  For the exponential polynomials, we refer to \cite{coleman} for proofs of the irreducibility of $e_n(x)$ for all $n\geq 1$ and for the fact that the Galois group always contains $A_n$, and equals $A_n$ if and only if $n\equiv 0\pmod{4}$.  

For the GLP, we refer to \cite{fl, hajir, hajir2} for the proofs of the fact that if $r$ is fixed, then for all but finitely many $n$, the $\mathcal{L}_n^{\langle r \rangle}(x)$ are irreducible with Galois group containing $A_n$.  There are additionally several specific cases where we know the Galois group for all $n$.  In particular, if $0 \leq r \leq 8$, then, by \cite[Theorem 1.3]{hajir}, the $\mathcal{L}_n^{\langle r\rangle}(x)$ are irreducible for all $n \geq 1$ with Galois group containing $A_n$.  

Of particular relevance to us is the case where $r=n$.  These are the Bessel polynomials and it is known by work of Fileseta-Trifonov \cite{FT} that they are irreducible for all $n\geq 1$ and by work of Grosswald \cite{grosswald} that they have Galois group $S_n$ for all $n\geq 1$.  

This brings us to the main purpose of this note.  The authors in \cite{pade} raise the question of the \emph{diagonal approximants} to the exponential polynomial.  We recall their definition and give a short recap of their associated experimental results.  

\begin{defn}
Let $n$ be a positive integer $\geq 2$ and write $n=2m+1-\delta$, for some $m \in \Z_{\geq 1}$ and $\delta \in \lbrace 0,1\rbrace$. The \emph{diagonal Pad\'e approximants} to the truncated exponential polynomial $e_{n}(x)$ are $P(m,m+\delta,x)$ and $Q(m,m+\delta,x)$.
\end{defn}

In \cite[\S4.2.1]{pade} they present some computational evidence for the Pad\'e approximants to have large Galois group.  Rewriting their table in our notation, they present us with the following data:

\begin{center}
\begin{tabular}{|c|c|c||c|c|c|}
\hline
$m$ & $\Gal_\Q P(m,m,x)$ & $\Gal_\Q Q(m,m,x)$ &  $m$ & $\Gal_\Q P(m,m+1,x)$ & $\Gal_\Q Q(m,m+1,x)$\\
\hline
6 & $S_6$ & $S_6$ & 4 & $A_4$ & $S_5$ \\
8 & $S_8$ & $S_8$ & 5 & $A_8$ & $A_9$\\
9 & $S_9$ & $S_9$ & 12 & $A_{12}$ & $S_{13}$\\
20 & $S_{20}$ & $S_{20}$ & 16 & $A_{16}$ & $S_{17}$\\
&&& 19 & $S_{19}$ & $S_{20}$\\
&&& 20 & $A_{20}$ & $S_{21}$\\
\hline
\end{tabular}
\end{center}
In light of the results of Grosswald \cite{grosswald}, the first two columns of this table continue indefinitely: $\Gal_\Q(P(m,m,x)) = \Gal_\Q(Q(m,m,x)) \simeq S_m$ for all $m \geq 1$.  We therefore turn our attention to the remaining diagonal approximants, which is the case where $n=\deg e_n(x)$ is even.  The Galois theory of these polynomials is not covered by \cite{grosswald}, nor is the irreducibility covered by \cite{FT}.  Our first result is as follows.

\begin{thm} \label{diagthm}
Let $m \in \Z_{\geq1}$ and assume the polynomials $P(m,m+1,x)$ and $Q(m,m+1,x)$ are irreducible over $\Q$.  Then $\Gal_{\Q} P(m,m+1,x) \supseteq A_{m}$, and $\Gal_{\Q} Q(m,m+1,x) \supseteq A_{m+1}$.  Furthermore,
\begin{enumerate}
\item $\Gal_{\Q} P(m,m+1,x) \simeq A_{m}$ if and only if $m\equiv 0\pmod{4}$ or if $m=2(2k+1)^2-1$ for some $k\geq0$, and
\item $\Gal_{\Q} Q(m,m+1,x) \simeq A_{m+1}$ if and only if $m=(2k+1)^2-1$ for some $k\geq1$.
\end{enumerate}
\end{thm}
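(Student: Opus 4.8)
The plan is to carry the whole problem over to generalized Laguerre polynomials and to treat the two assertions --- that $\Gal_\Q$ contains the alternating group, and that the discriminant decides between $A_n$ and $S_n$ --- independently. Write $f_P=\mathcal{L}_m^{\langle m+1\rangle}(x)=P(m,m+1,x)$, of degree $m$, and $f_Q=\mathcal{L}_{m+1}^{\langle m\rangle}(-x)=Q(m,m+1,x)$, of degree $m+1$. By hypothesis $f_P$ and $f_Q$ are irreducible, so their Galois groups are transitive of the respective degrees; since a splitting field is unchanged under $x\mapsto -x$, also $\Gal_\Q f_Q=\Gal_\Q\bigl(\mathcal{L}_{m+1}^{\langle m\rangle}\bigr)$.

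\emph{Step 1: the alternating group is contained.} Here I would follow the Newton-polygon approach used for $e_n$ (cf.\ \cite{coleman}), for the Bessel polynomials $\mathcal{L}_n^{\langle n\rangle}$ (cf.\ \cite{FT,grosswald}), and more generally by Hajir \cite{hajir}. The coefficient of $x^j$ in $\mathcal{L}_n^{\langle n+1\rangle}$ is $\binom nj\frac{(2n+1-j)!}{(n+1)!}$ and in $\mathcal{L}_n^{\langle n-1\rangle}$ is $\binom nj\frac{(2n-1-j)!}{(n-1)!}$, so Legendre's formula makes every $p$-adic Newton polygon explicit. For a prime $p$ with $\frac{2n+1}{3}<p\le n$ and $p\nmid n$, the coefficient valuations come out in the pattern $1,\dots,1,2,\dots,2,1,\dots,1,0,\dots,0$, and the Newton polygon acquires a segment of horizontal length exactly $p$ and slope $-1/p$; the corresponding factor of $f$ over $\Q_p$ is irreducible and tamely totally ramified of degree $p$, so $\Gal_\Q f$ contains a $p$-cycle. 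Choosing $p$ moreover in the window $\bigl(\frac{2n+1}{3},\,n-3\bigr]$ makes $p>n/2$, which upgrades transitivity to primitivity, and $p\le n-3$, so Jordan's theorem (a primitive subgroup of $S_n$ containing a prime-length cycle fixing at least three points contains $A_n$) finishes. Such a $p$ exists once $n$ is past a small explicit bound, by estimates for primes in short intervals; the finitely many remaining $n$ are handled individually (a different prime, or a direct computation). \emph{I expect this step to be the main obstacle:} none of the cited results covers the parameter $r=n\pm1$, so the valuation bookkeeping, the behaviour at boundary primes, and the exceptional set must all be worked out afresh, even though the template is the Bessel one.

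\emph{Step 2: the discriminant.} By \cite{hajir2} (or by a direct resultant computation) the monic polynomial $\mathcal{L}_n^{\langle r\rangle}$ has
\[
\disc\bigl(\mathcal{L}_n^{\langle r\rangle}\bigr)=\prod_{\nu=1}^{n}\nu^{\nu}\,(\nu-1-n-r)^{\nu-1}
=(-1)^{n(n-1)/2}\Bigl(\prod_{\nu=1}^{n}\nu^{\nu}\Bigr)\Bigl(\prod_{j=1}^{n-1}(r+j)^{\,n-j}\Bigr).
\]
Both $(n,r)=(m,m+1)$ and $(n,r)=(m+1,m)$ give the second factor $\nu-2m-2$, so
\[
\disc f_Q=\disc\bigl(\mathcal{L}_{m+1}^{\langle m\rangle}\bigr)=(-1)^{m}(m+1)^{2m+1}\,\disc f_P\equiv(-1)^{m}(m+1)\,\disc f_P\pmod{(\Q^{\times})^{2}}.
\]
Given Step 1, $\Gal_\Q f\simeq A_n$ exactly when $\disc f$ is a square, so it remains to find the square class of $\disc f_P$. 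Modulo squares $\prod_{\nu\le m}\nu^{\nu}\equiv\prod_{\nu\le m,\ \nu\text{ odd}}\nu$ and $\prod_{j\le m-1}(m+1+j)^{m-j}\equiv\prod_{k\text{ even},\ m+1<k\le 2m}k$; writing the product of odd numbers $\le m$ as $m!/(2^{\lfloor m/2\rfloor}\lfloor m/2\rfloor!)$ and the product of those even numbers as a ratio of factorials, the factorials telescope: for $m$ even their product is a perfect square, so $\disc f_P\equiv(-1)^{m/2}$; for $m$ odd their product is $\equiv 2(m+1)$, so $\disc f_P\equiv(-1)^{(m-1)/2}\,(m+1)/2$.

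\emph{Step 3: reading off the characterizations.} For (1): if $m\equiv0\pmod{4}$ then $\disc f_P$ is a square; if $m\equiv2\pmod{4}$ then $\disc f_P\equiv-1$; if $m\equiv3\pmod{4}$ then $\disc f_P\equiv-(m+1)/2<0$; and if $m\equiv1\pmod{4}$ then $\disc f_P$ is the square class of the odd integer $(m+1)/2$, a square precisely when $(m+1)/2=(2k+1)^{2}$, i.e.\ $m=2(2k+1)^{2}-1$ --- which by itself forces $m\equiv1\pmod{4}$. This is (1). For (2), substitute into $\disc f_Q\equiv(-1)^{m}(m+1)\disc f_P$: the residues $m\equiv1,2,3\pmod{4}$ give $\disc f_Q\equiv-2,\,-(m+1),\,2$ respectively, none a square, while $m\equiv0\pmod{4}$ gives $\disc f_Q\equiv m+1$, a square precisely when $m+1=(2k+1)^{2}$ --- which forces $m\equiv0\pmod{4}$ automatically --- that is, $m=(2k+1)^{2}-1$ with $k\ge1$. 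This is (2), and together with the finite check from Step 1 it completes the theorem.
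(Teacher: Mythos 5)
Your proposal takes essentially the same route as the paper: the containment $A_n\subseteq\Gal_\Q$ is proved exactly as in your Step 1 (Proposition \ref{diaggal}: a prime $p$ in the window $(2m/3,\,m-3)$, which exists for $m\geq 21$ by Rosser--Schoenfeld, produces a Newton-polygon segment of slope $-1/p$ and hence a $p$-cycle, Jordan's criterion applies, and $1\leq m\leq 20$ is checked by machine), while your Steps 2--3 are Proposition \ref{diagdisc}, the same reduction of Schur's discriminant formula modulo squares with the same case analysis on $m \bmod 4$. The valuation bookkeeping you flag as the main obstacle does work out exactly as you sketched; in fact your pattern $1,\dots,1,2,\dots,2,1,\dots,1,0,\dots,0$ and your strict lower bound $p>(2n+1)/3$ are, if anything, slightly more precise than what the paper writes down.
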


Determining the general irreducibility of these polynomials is a much more difficult problem, and one that we do not take on in this paper.  However, there are some special cases in which the $P(m,m+1,x)$ and $Q(m,m+1,x)$ are Eisenstein (more precisely, satisfy the Eisenstein-Dumas criterion).

\begin{thm}\label{edthm}
Let $p$ be an odd prime and let $n \geq 1$. Then the polynomials $P(p^n,p^n+1,x)$, $Q(p^n,p^n+1,x)$, $P(p^n,p^n-1,x)$, $Q(p^n-1,p^n,x)$ are irreducible over  $\Q$.
\end{thm}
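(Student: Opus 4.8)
The plan is to compute the $p$-adic Newton polygons of the four polynomials and apply the Eisenstein--Dumas criterion. Via $P(u,v,x)=\mathcal{L}_u^{\langle v\rangle}(x)$, $Q(u,v,x)=\mathcal{L}_v^{\langle u\rangle}(-x)$, and since $x\mapsto -x$ changes neither irreducibility nor the $p$-adic Newton polygon, the four polynomials are, up to sign, the three Generalized Laguerre polynomials $\mathcal{L}_{p^n}^{\langle p^n+1\rangle}(x)$ and $\mathcal{L}_{p^n}^{\langle p^n-1\rangle}(x)$, of degree $p^n$, and $\mathcal{L}_{p^n+1}^{\langle p^n\rangle}(x)$, of degree $p^n+1$. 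I will use the criterion in two forms: for $f=\sum_j a_jx^j\in\Z[x]$ of degree $N$ with $p\nmid a_N$, if the lower convex hull of $\{(j,v_p(a_j))\}$ is the single segment from $(0,v_p(a_0))$ to $(N,0)$ and $\gcd(v_p(a_0),N)=1$, then $f$ is irreducible over $\Q$; and if instead this hull is that segment over $[0,N']$ followed by the horizontal segment over $[N',N]$ with $\gcd(v_p(a_0),N')=1$, then $f=f_1f_2$ over $\mathbf{Q}_p$ with $f_1$ irreducible of degree $N'$, so $f$ is irreducible over $\Q$ unless it has a factor of degree $\le N-N'$.

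The $j$-th coefficient of $\mathcal{L}_m^{\langle r\rangle}$ is $c_j=\binom{m}{j}\tfrac{(r+m-j)!}{r!}$; the leading coefficient is $1$, and the constant term is a product of $m$ consecutive integers. From Legendre's formula $v_p(k!)=\tfrac{k-s_p(k)}{p-1}$, with $s_p$ the base-$p$ digit sum, one gets that in each of the three cases $v_p$ of the constant term equals $\tfrac{p^n-1}{p-1}=1+p+\cdots+p^{n-1}$, which is $\equiv 1\pmod p$ and hence coprime to $p^n$. Legendre's formula and $v_p\binom{m}{j}=\tfrac{s_p(j)+s_p(m-j)-s_p(m)}{p-1}$ give $(p-1)v_p(c_j)=s_p(j)+s_p(m-j)+s_p(r)-s_p(r+m-j)-s_p(m)+(m-j)$, and the Newton-polygon condition $v_p(c_j)\ge\tfrac{p^n-1}{p-1}\cdot\tfrac{p^n-j}{p^n}$ — the height over $j$ of the segment joining $(0,\tfrac{p^n-1}{p-1})$ and $(p^n,0)$ — becomes, for $0<j<p^n$ and since its two sides differ by an integer while the right side lies in $(-1,0)$, the inequality $s_p(j)+s_p(m-j)+s_p(r)\ge 1+s_p(r+m-j)$. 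This follows at once from subadditivity $s_p(r+m-j)\le s_p(r)+s_p(m-j)$ and the trivial bound $s_p(j)\ge 1$; the abscissae $j=0,\,p^n$ (and $j=p^n+1$ for the larger polynomial) lie on the relevant polygon by the valuations just computed.

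For the two degree-$p^n$ polynomials the $p$-Newton polygon is exactly the segment from $(0,\tfrac{p^n-1}{p-1})$ to $(p^n,0)$, so the first form of the criterion gives the irreducibility of $P(p^n,p^n+1,x)$, $P(p^n,p^n-1,x)$, and $Q(p^n-1,p^n,x)=\mathcal{L}_{p^n}^{\langle p^n-1\rangle}(-x)$. For $Q(p^n,p^n+1,x)=\mathcal{L}_{p^n+1}^{\langle p^n\rangle}(-x)$ one also has $v_p(c_{p^n})=v_p((p^n+1)^2)=0$, so its Newton polygon is the above sloped segment over $[0,p^n]$ followed by the horizontal segment over $[p^n,p^n+1]$; hence this polynomial is irreducible over $\Q$ unless it has a rational root, which (the polynomial being monic up to sign) is a positive integer $\rho$. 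Reducing modulo $p$ and using Lucas's theorem — only $j\in\{0,1,p^n,p^n+1\}$ give a unit binomial coefficient, and among these only $j=p^n,p^n+1$ keep the corresponding product of consecutive integers prime to $p$ — gives $\mathcal{L}_{p^n+1}^{\langle p^n\rangle}(x)\equiv x^{p^n}(x+1)\pmod p$, hence $Q(p^n,p^n+1,x)\equiv x^{p^n}(x-1)\pmod p$. Since a rational root must be the root of the linear $p$-adic factor and so a $p$-adic unit, this forces $\rho\equiv 1\pmod p$, and this last possibility is excluded by reducing modulo a convenient auxiliary prime.

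The technical heart of the argument is the digit-sum inequality of the second paragraph, which must be established uniformly in $p$, in $n$, and in the three superscripts $p^n$ and $p^n\pm 1$ simultaneously; the other delicate point is the rational-root exclusion for $Q(p^n,p^n+1,x)$ — the one step where the Newton polygon does not by itself finish the proof.
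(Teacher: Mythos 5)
Your treatment of the three degree-$p^n$ polynomials is correct and is essentially the paper's argument: an Eisenstein--Dumas computation of the $p$-adic Newton polygon, with $v_p(a_0)=\tfrac{p^n-1}{p-1}\equiv 1\pmod{p}$ and the single-segment shape checked coefficient by coefficient. Your verification via Legendre's formula and the subadditivity $s_p(a+b)\le s_p(a)+s_p(b)$ is in fact cleaner and more uniform than the paper's, which derives an exact formula $v_p(a_j)=v_p(a_0)+v_p(j-1)-v_p(j!)$ through a three-case analysis of the base-$p$ digits of $j$, writes out only $P(p^n,p^n+1,x)$ in full, handles $Q(p^n-1,p^n,x)$ as a ``nearly identical'' exercise, and gets $P(p^n,p^n-1,x)$ from the symmetry $P(u,v,x)=Q(v,u,-x)$.

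The genuine gap is the fourth polynomial, $Q(p^n,p^n+1,x)$, which has degree $p^n+1$. You correctly find that its $p$-adic Newton polygon carries an extra horizontal unit segment, so the method only yields: over $\Q_p$ the polynomial is an irreducible degree-$p^n$ factor times a linear factor, hence over $\Q$ it is irreducible unless it has an integral root, necessarily $\equiv 1\pmod{p}$. Your closing claim that this ``is excluded by reducing modulo a convenient auxiliary prime'' is not an argument: no such prime is exhibited, and it is unclear how to produce one uniformly in $p$ and $n$. So your proof is incomplete exactly where the Newton polygon stops deciding the question. You should be aware that the paper does not close this gap either: its corollary deduces the irreducibility of $Q(p^n,p^n+1,x)$ from the identity $P(u,v,x)=Q(v,u,-x)$, but applying that identity to the polynomials actually proved irreducible yields $Q(p^n+1,p^n,x)$ (degree $p^n$), not $Q(p^n,p^n+1,x)$ (degree $p^n+1$); moreover the paper's own later proposition concedes that $Q(p,p+1,x)$ may a priori factor as a linear times an irreducible degree-$p$ polynomial. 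Most likely the intended statement is $Q(p^n+1,p^n,x)$, which your degree-$p^n$ argument already covers; for the statement as literally written, the root-exclusion step you have not supplied is genuinely missing.
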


One of our goals in writing this paper is to frame the Pad\'e approximation in terms of other well-known families of polynomials.  Slightly more broadly, we can rewrite (\ref{pade_exp}) in terms of the hypergeometric functions (up to scaling):
$$
_1F_1(-u-v-\delta,-u-v-\delta;x)={\frac  {{}_{1}F_{1}(-u;-u-v;x)}{{}_{1}F_{1}(-v;-u-v;-x)}} + O(x^{u+v+1}),
$$
and ask whether similar relationships that hold for more general hypergeometric functions will aid in determining their irreducibility and Galois properties.   \\

\noindent \textbf{Notation.} If $p$ is a prime number then we write $v_p: \Q \to \Z$ for the $p$-adic valuation and $\Q_p$ for the $p$-adic numbers.  Following the conventions of \cite{coleman}, let $g \in \Q_p[x]$ and write 
$$
g(x) = a_0 + a_1x+ \cdots + a_kx^k.
$$
The Newton polygon of $g$ is defined to be the lower convex hull of the points
$$
(i,v_p(a_i)) \subset \R^2, \ \ 0 \leq i \leq k.
$$ 
Note that some authors reverse the indices. \\

\noindent \textbf{Acknowledgments.} We would like to thank Farshid Hajir for helpful conversations and for suggesting that we try to interpret the Pad\'e approximants in terms of the Generalized Laguerre Polynomials.

\section{Results}

We start by recalling the well-known discriminant formula \cite[(1.1)]{hajir}, originally due to Schur: 
\begin{align} \label{glp_disc}
\disc (-1)^nn!L_n^{(\alpha)}(x) = \prod_{j=2}^{n} j^j (\alpha +j)^{j-1},
\end{align}
from which we immediately obtain a discriminant formula for each of the approximants.

\begin{prop} \label{discprop}
Let $(u,v) \in \NN \times \NN$ and let 
$$
\Delta(u,v) = (-1)^u \prod_{j=1}^u j^j \prod_{j=1}^{u-1}(v+j)^{u-j}.  
$$
Then $\disc P(u,v,x) = \Delta (u,v)$ and $\disc Q(u,v,x)= \Delta(v,u)$.
\end{prop}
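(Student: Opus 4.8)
The plan is to reduce both identities to Schur's discriminant formula (\ref{glp_disc}) by a parameter specialization, and then to put the resulting product into the shape of $\Delta(u,v)$ by reversing the index of multiplication. Recall from (\ref{LL}) and the coefficient comparison in the introduction that $P(u,v,x) = \mathcal{L}_u^{\langle v \rangle}(x) = u!\,L_u^{(-1-u-v)}(x)$, and that this polynomial is monic of degree $u$; hence $(-1)^u u!\, L_u^{(-1-u-v)}(x) = (-1)^u P(u,v,x)$ has leading coefficient $(-1)^u$. Since the discriminant of a degree-$u$ polynomial scales as $\disc(cf) = c^{\,2u-2}\disc(f)$ and $2u-2$ is even, scaling by $(-1)^u$ does not affect the discriminant, so $\disc P(u,v,x) = \disc\bigl((-1)^u u!\, L_u^{(-1-u-v)}(x)\bigr)$. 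Applying (\ref{glp_disc}) with $n = u$ and $\alpha = -1-u-v$ then gives
$$
\disc P(u,v,x) \;=\; \prod_{j=2}^{u} j^{\,j}\,(j-1-u-v)^{\,j-1}.
$$

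Next I would match this against the definition of $\Delta(u,v)$. The factor $\prod_{j=2}^u j^{\,j}$ already equals $\prod_{j=1}^u j^{\,j}$, the $j=1$ term being $1$. For the second factor, substitute $j \mapsto u+1-j$: the base $j-1-u-v$ becomes $-(v+j)$ and the exponent $j-1$ becomes $u-j$, so $\prod_{j=2}^u (j-1-u-v)^{\,j-1} = \prod_{j=1}^{u-1}\bigl(-(v+j)\bigr)^{\,u-j}$. Pulling the minus signs out contributes the scalar $(-1)^{\sum_{j=1}^{u-1}(u-j)}$, and combining the two factors yields $\disc P(u,v,x) = \Delta(u,v)$. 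For the statement about $Q$, I would use the identity $Q(u,v,x) = \mathcal{L}_v^{\langle u\rangle}(-x) = P(v,u,-x)$ together with the fact that the substitution $x \mapsto -x$ leaves the discriminant unchanged (the roots are merely negated, and the leading coefficient changes only by $(-1)^v$, which is again absorbed by the scaling formula). Hence $\disc Q(u,v,x) = \disc P(v,u,x) = \Delta(v,u)$ by the case just proved with the roles of $u$ and $v$ exchanged.

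There is no genuine obstacle here — Proposition \ref{discprop} is essentially a change of variables away from (\ref{glp_disc}). The only point that demands care is the sign bookkeeping: one must track the power of $-1$ produced by the index reversal $j \mapsto u+1-j$ (its exponent sums to a triangular number) and check that, after discarding the trivially even contributions coming from the scalings by $(-1)^u$ and $(-1)^v$, what remains is precisely the sign prefactor appearing in the definition of $\Delta$. All the other manipulations are routine.
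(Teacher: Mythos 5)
Your reduction to Schur's formula is exactly the route the paper intends (the paper offers no separate proof, presenting the proposition as immediate from (\ref{glp_disc})), and every manipulation up to the final identification is correct: with $n=u$ and $\alpha=-1-u-v$ one gets $\prod_{j=2}^{u} j^{j}(j-1-u-v)^{j-1}$, and the reversal $j\mapsto u+1-j$ turns the second factor into $(-1)^{\sum_{j=1}^{u-1}(u-j)}\prod_{j=1}^{u-1}(v+j)^{u-j}$. But the one step you yourself flag as ``demanding care'' is the step you do not actually carry out, and it is where the claim breaks: $\sum_{j=1}^{u-1}(u-j)=\binom{u}{2}$, so the sign your computation produces is $(-1)^{u(u-1)/2}$, which is \emph{not} the prefactor $(-1)^u$ in the stated definition of $\Delta(u,v)$; the two disagree precisely when $u\equiv 1,2\pmod 4$. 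A direct check at $u=2$: $P(2,v,x)=x^{2}+2(v+1)x+(v+1)(v+2)$ has discriminant $-4(v+1)$, whereas $\Delta(2,v)=+4(v+1)$.

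In fact your computation gives the correct answer and exposes a misprint in the proposition: the proof of Proposition \ref{diagdisc} asserts $\disc P(m,m+1,x)<0$ exactly for $m\equiv 2,3\pmod 4$ and writes $\disc Q(m,m+1,x)$ with prefactor $(-1)^{\binom{m+1}{2}}$, both consistent with the sign $(-1)^{\binom{u}{2}}$ and inconsistent with $(-1)^u$. So the substance of your argument is right, but as written it ends by asserting an equality of signs that is false; you should either carry the sign through and correct $\Delta(u,v)$ to have prefactor $(-1)^{\binom{u}{2}}$, or else explain why $(-1)^{\binom{u}{2}}=(-1)^u$ (it does not). Your treatment of $Q$ via $Q(u,v,x)=P(v,u,-x)$ and the invariance of the discriminant under $x\mapsto -x$ is fine, subject to the same sign correction with $u$ replaced by $v$.
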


We also recall a criterion of Jordan for showing that a transitive subgroup of $S_n$ contains $A_n$.  This is the technique employed in \cite{coleman} for showing the Galois groups of the $e_{n}(x)$ contain $A_n$, and similarly for the GLP in \cite{hajir}.  We will be content  simply to quote the relevant theorems we will need for our analysis.  

\begin{thm}[Jordan] \label{jordan}
Let $n \geq 7$ be a positive integer and suppose $G$ is a transitive subgroup of $S_n$.  If $G$ contains a $p$-cycle, for $p$ prime in the range $n/2 < p < n-2$, then $G$ contains $A_n$. 
\end{thm}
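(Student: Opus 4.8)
The plan is to reduce to the case of a \emph{primitive} group and then invoke the classical form of Jordan's theorem for primitive groups. Throughout, let $\sigma \in G$ denote the given $p$-cycle, let $\Sigma$ be its support (so $\lvert \Sigma\rvert = p$), and let $\Delta = \{1,\dots,n\}\setminus\Sigma$ be the set of points it fixes. The two hypotheses on $p$ play distinct roles: the lower bound $p > n/2$ forces $\lvert\Delta\rvert = n - p < p$, while the upper bound $p < n-2$ gives $\lvert\Delta\rvert = n - p \geq 3$. (For small $n$, e.g.\ $n=7$, there is no prime in the open interval $(n/2,\,n-2)$ and the statement is vacuous, so I may assume $n \geq 8$.)

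First I would show that a transitive $G$ satisfying the hypotheses is in fact primitive. Suppose not, and fix a block system with blocks of size $d$, where $d \mid n$ and $1 < d < n$, so that $d \leq n/2$ and there are $n/d \geq 2$ blocks. The cycle $\sigma$ permutes the blocks, and since $\sigma^p = 1$ the induced permutation of the $n/d$ blocks has order $1$ or $p$. If it has order $p$, then $p \mid (n/d) \leq n/2 < p$, a contradiction; hence $\sigma$ fixes every block setwise. But then every $\langle\sigma\rangle$-orbit lies inside a single block, and in particular the unique nontrivial orbit $\Sigma$ is contained in one block, forcing $d \geq \lvert\Sigma\rvert = p > n/2 \geq d$, again a contradiction. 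Therefore $G$ is primitive.

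It remains to treat the primitive case, and this is the main obstacle. Here I would appeal to the classical theorem of Jordan: a primitive subgroup of $S_n$ containing a cycle of prime length that fixes at least three points must contain $A_n$ (see, e.g., Wielandt, \emph{Finite Permutation Groups}, Thm.~13.9, or Dixon--Mortimer, \emph{Permutation Groups}, Thm.~3.3E). Since $G$ is now primitive and $\sigma$ is a $p$-cycle fixing $\lvert\Delta\rvert \geq 3$ points, this yields $G \supseteq A_n$ at once. If a self-contained argument were wanted, I would prove this primitive case by the \emph{Jordan-set} method: the set $\Sigma$ is a Jordan set, because the pointwise stabilizer of $\Delta$ contains $\langle\sigma\rangle$ and hence acts transitively on $\Sigma$; the enlargement lemma, stating that two overlapping but distinct Jordan sets have union a Jordan set, then lets one repeatedly grow $\Sigma$ under the primitive action of $G$ until one obtains a Jordan set of cofinite size, forcing $G$ to be highly transitive. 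A highly transitive group containing a prime-length cycle is then shown to contain $A_n$ by a direct generation argument.

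The delicate point — and the reason I would rather cite the primitive case than reprove it — is the merging step for Jordan sets together with the verification that the exceptional small-degree primitive groups that fail the conclusion are excluded precisely by the conditions that the cycle fix at least three points and that $n \geq 7$. Once primitivity is in hand, however, the remainder is entirely standard, and the whole argument hinges on the elementary observation that $p > n/2$ is exactly what rules out a nontrivial block system.
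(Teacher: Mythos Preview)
The paper does not prove this theorem at all; it merely quotes it as a classical result (attributed to Jordan) to be used later via Theorem~\ref{hajirjordan}. Your argument is correct and is essentially the standard proof: the inequality $p>n/2$ forces primitivity by the block-counting dichotomy you describe, and then the primitive case is exactly the classical form of Jordan's theorem (a primitive group containing a $p$-cycle fixing at least three points contains $A_n$), with the condition $p<n-2$ supplying the three fixed points. Since the paper offers no proof to compare against, there is nothing further to contrast; your write-up could serve as the missing justification.
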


\begin{defn}[Definition 5.1 of \cite{hajir}]
Given $f \in \Q[x]$, let $N_f$, called the \emph{Newton Index of $f$}, be the least common multiple of the denominators (in lowest terms) of all slopes of $\NP_p( f )$ as $p$ ranges over all primes.
\end{defn}

The main theorem, which we quote from \cite[Theorem NP]{coleman}, of Newton Polygons is given by the following.  

\begin{thm}[Main Theorem of Newton Polygons]
Let $p$ be a prime.  Let $(x_0,y_0)$, $(x_1,y_1)$, \dots, $(x_p,y_p)$ denote the vertices of the Newton Polygon of a polynomial $g\in \Q[x]$.  Then over $\Q_p$, $g$ factors as follows:
$$
g_1(x) g_2(x) \cdots g_r(x)
$$
where the degree of $g_i$ is $x_i-x_{i-1}$ and all the roots of $g_i(x)$ in $\overline{\Q}_p$ have valuation $-\left(\frac{y_i-y_{i-1}}{x_i-x_{i-1}} \right)$.
\end{thm}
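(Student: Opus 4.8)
The plan is to work over $\overline{\Q}_p$, on which $v_p$ extends \emph{uniquely}, split $g$ into linear factors there, and then regroup those factors by the valuations of the roots. We may assume $g(0)\neq 0$ (otherwise factor out the relevant power of $x$, which only adds a vertical left edge), so that $g(x)=a_k\prod_{j=1}^{k}(x-\alpha_j)$ with every $\alpha_j\in\overline{\Q}_p^{\times}$ and $k=\deg g$. Let $\lambda_1<\cdots<\lambda_s$ be the distinct values of $v_p(\alpha_j)$, occurring with multiplicities $m_1,\dots,m_s$, and set
$$
h_t(x)=\prod_{v_p(\alpha_j)=\lambda_t}(x-\alpha_j),\qquad g=a_k\,h_1h_2\cdots h_s.
$$
The theorem will follow from three claims: (a) each $h_t$ already lies in $\Q_p[x]$; (b) the Newton polygon of $h_t$ is a single edge of slope $-\lambda_t$; and (c) the Newton polygon of a product is the concatenation, sorted by increasing slope, of the Newton polygons of its factors.

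Claim (a) is the one genuinely structural input. Since $g\in\Q[x]\subseteq\Q_p[x]$ and the extension of $v_p$ to $\overline{\Q}_p$ is unique, $v_p$ is invariant under $\Gal(\overline{\Q}_p/\Q_p)$; hence the set of roots of any fixed valuation $\lambda_t$ is permuted by this Galois group, so the symmetric functions of those roots---the coefficients of $h_t$---are Galois-fixed and $h_t\in\Q_p[x]$. For (b), write $h_t(x)=\sum_i(-1)^ie_ix^{m_t-i}$, where $e_i$ is the $i$-th elementary symmetric function of the $m_t$ roots of valuation $\lambda_t$. Each monomial of $e_i$ is a product of $i$ such roots, so $v_p(e_i)\geq i\lambda_t$, with equality at $i=0$ and $i=m_t$; thus the extreme vertices $(m_t,0)$ and $(0,m_t\lambda_t)$ of the polygon of $h_t$ are joined by a single edge of slope $-\lambda_t$, all interior points lying on or above it.

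For (c) I would introduce, for each rational $\mu$, the functional $w_\mu(F)=\min_i\big(v_p(b_i)+\mu\, i\big)$ on $F=\sum_i b_ix^i$: the lower convex hull of the points $(i,v_p(b_i))$ is recovered from the family $\mu\mapsto w_\mu(F)$, an edge of slope $-\mu$ appearing exactly where the minimum is attained at more than one index. The crux is that $w_\mu$ is additive, $w_\mu(FH)=w_\mu(F)+w_\mu(H)$: collecting in each factor the coefficients where the defining minimum is attained yields nonzero ``initial forms'' over the residue field, and these multiply without cancellation because that ring is a domain. Additivity of every $w_\mu$ is precisely the assertion that $\NP(FH)$ is the edge-concatenation of $\NP(F)$ and $\NP(H)$. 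Applying this to $g=a_k\,h_1\cdots h_s$ and using that a lower convex hull has increasing slopes, the polygon of $g$ reads from left to right as edges of slopes $-\lambda_s,-\lambda_{s-1},\dots,-\lambda_1$ and horizontal lengths $m_s,\dots,m_1$. The $i$-th edge, from $(x_{i-1},y_{i-1})$ to $(x_i,y_i)$, then matches the factor $g_i=h_{s-i+1}$ (the constant $a_k$ absorbed into one factor), of degree $x_i-x_{i-1}=m_{s-i+1}$, whose roots all have valuation $\lambda_{s-i+1}=-\big(\tfrac{y_i-y_{i-1}}{x_i-x_{i-1}}\big)$, as claimed.

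The main obstacle is the additivity in (c): although ``the Newton polygon of a product is the sum of the Newton polygons of the factors'' is intuitively clear, the honest argument requires verifying that the dominant coefficient contributions over each vertex genuinely fail to cancel, which is exactly where the ultrametric inequality and the convexity from (b) are used. Conceptually, the subtle point is the descent in (a): without the uniqueness of the extension of $v_p$ to $\overline{\Q}_p$, the grouped factors $h_t$ would only be defined over a finite extension of $\Q_p$ rather than over $\Q_p$ itself.
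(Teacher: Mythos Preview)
The paper does not supply its own proof of this theorem: it is quoted verbatim from Coleman \cite[Theorem NP]{coleman} as a black box and then applied. So there is no in-paper argument to compare against.

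Your sketch is a correct outline of the standard proof. The three ingredients you isolate---Galois descent of the valuation-graded factors via uniqueness of the extension of $v_p$ to $\overline{\Q}_p$, the single-segment polygon of a ``pure slope'' factor via the ultrametric estimate on elementary symmetric functions, and additivity of the family of Gauss norms $w_\mu$ giving edge-concatenation for products---are exactly what the classical proof uses, and your identification of the two genuinely delicate points (non-cancellation of leading terms in (c), and the descent in (a)) is accurate. One small caveat: in (a) you should say that $\Gal(\overline{\Q}_p/\Q_p)$ permutes the \emph{multiset} of roots of $g$ (since $g$ need not be separable), so that the product defining $h_t$ is preserved with multiplicities; the argument then goes through unchanged.
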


An important consequence is that the denominators of the slopes are ramification indices, and hence divide the order of the Galois group.  By incorporating Jordan's theorem, we get a powerful technique for showing the Galois group is large.

\begin{thm}[Theorem 4.3 of \cite{hajir}] \label{hajirjordan}
Given an irreducible polynomial $f \in \Q[x]$, $N_f$ divides the order of the Galois group of $f$. Moreover, if $N_f$ has a prime divisor $q$ in the range $n/2 < q < n-2$, where $n$ is the degree of $f$, then the Galois group of $f$ contains $A_n$.
\end{thm}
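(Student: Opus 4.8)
The plan is to pass to the splitting field of $f$ and translate the denominators of the slopes of the $p$-adic Newton polygons into ramification data for the Galois group $G = \Gal(L/\Q)$, where $L$ denotes the splitting field of $f$ over $\Q$. Since $f$ is irreducible, $G$ acts transitively on its $n$ roots, so $G$ is a transitive subgroup of $S_n$; this is the hypothesis we will ultimately feed into Jordan's theorem. The two assertions then separate cleanly: the divisibility $N_f \mid |G|$ is pure ramification theory, while the $A_n$ conclusion is a group-theoretic deduction from Cauchy's theorem and Theorem \ref{jordan}.

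For the divisibility statement I would fix a prime $p$ and a prime $\mathfrak{P}$ of $L$ above it, with decomposition group $D_\mathfrak{P}$ and inertia group $I_\mathfrak{P}$. The completion $L_\mathfrak{P}$ is the splitting field of $f$ over $\Q_p$; it is Galois over $\Q_p$ with group $D_\mathfrak{P}$, and $|I_\mathfrak{P}| = e(L_\mathfrak{P}/\Q_p)$ equals its ramification index. By the Main Theorem of Newton Polygons, each slope of $\NP_p(f)$ is $-v_p(\alpha)$ for some root $\alpha \in L_\mathfrak{P}$ of $f$. If that slope equals $a/d$ in lowest terms, then, extending $v_p$ to $\overline{\Q}_p$, the value $v_p(\alpha) = -a/d$ lies in the value group $\frac{1}{e'}\Z$ of $\Q_p(\alpha)$, where $e' = e(\Q_p(\alpha)/\Q_p)$; since $\gcd(a,d)=1$ this forces $d \mid e'$. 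Because $\Q_p(\alpha) \subseteq L_\mathfrak{P}$, we have $e' \mid e(L_\mathfrak{P}/\Q_p) = |I_\mathfrak{P}|$, and $|I_\mathfrak{P}| \mid |G|$ as $I_\mathfrak{P} \le G$. Hence every slope denominator, over every prime $p$, divides $|G|$, and taking the least common multiple gives $N_f \mid |G|$.

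For the second statement, suppose $q$ is a prime divisor of $N_f$ with $n/2 < q < n-2$. By the first part $q \mid |G|$, so Cauchy's theorem furnishes $\sigma \in G$ of order $q$. As $q$ is prime, $\sigma$ is a product of disjoint $q$-cycles together with fixed points; since $q > n/2$ at most one such cycle can occur, whence $\sigma$ is a single $q$-cycle. Thus $G$ is a transitive subgroup of $S_n$ containing a $q$-cycle for a prime $q$ in the range $n/2 < q < n-2$. The mere existence of such a prime already forces $n \geq 8$, so in particular $n \geq 7$ and Theorem \ref{jordan} applies, yielding $G \supseteq A_n$.

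The main obstacle is making the local-to-global ramification bookkeeping of the second paragraph rigorous: one must correctly identify $|I_\mathfrak{P}|$ with the ramification index of $L_\mathfrak{P}/\Q_p$, confirm that $L_\mathfrak{P}$ really is the local splitting field and therefore contains a root of each prescribed valuation, and track that the denominator $d$ of a slope divides the local ramification index $e'$ before pushing up to $|I_\mathfrak{P}|$ and then to $|G|$. Once this chain of divisibilities is in place, extracting a $q$-cycle and invoking Jordan's criterion are routine.
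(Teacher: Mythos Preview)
Your proposal is correct and matches the paper's approach: the paper does not give a full proof (the result is quoted from \cite{hajir}), but its one-line gloss preceding the statement---that slope denominators are ramification indices and hence divide $|G|$, and that combining this with Jordan's theorem yields the $A_n$ conclusion---is precisely the argument you have fleshed out. The details you supply (identifying $|I_\mathfrak{P}|$ with the local ramification index, pushing the chain $d \mid e' \mid |I_\mathfrak{P}| \mid |G|$, and extracting a single $q$-cycle via Cauchy together with $q > n/2$) are all sound.
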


We now apply this technique to the Pad\'e approximants, and prove Theorem \ref{diagthm} in two steps.  The first is given by the following proposition.

\begin{prop} \label{diaggal}
Let $m \in \NN$ and assume the polynomials $P(m,m+1,x)$ and $Q(m,m+1,x)$ are irreducible over $\Q$.  Then $\Gal_{\Q} P(m,m+1,x) \supseteq A_{m}$, and $\Gal_{\Q} Q(m,m+1,x) \supseteq A_{m+1}$.
\end{prop}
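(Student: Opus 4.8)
The plan is to apply Theorem \ref{hajirjordan} (the Hajir–Jordan criterion), which says that for an irreducible polynomial $f$ of degree $n$, the Newton index $N_f$ divides $|\Gal_{\Q} f|$, and if $N_f$ has a prime divisor $q$ with $n/2 < q < n-2$ then $\Gal_{\Q} f \supseteq A_n$. So, assuming irreducibility of $P(m,m+1,x)$ and $Q(m,m+1,x)$, the task reduces to producing, for each of these degree-$m$ (resp.\ degree-$(m+1)$) polynomials, a prime $q$ in the appropriate range that appears with a non-integer slope in some $\NP_q$, hence divides $N_f$. The natural source of such a prime is Bertrand's postulate: there exists a prime $q$ with $m/2 < q \leq m$ (and more carefully in the open Jordan range $m/2 < q < m-2$ once $m$ is large enough), and I would use the discriminant formula in Proposition \ref{discprop} to locate where that prime ramifies.

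Concretely, recall $P(m,m+1,x) = \mathcal{L}_m^{\langle m+1\rangle}(x)$, and by Proposition \ref{discprop},
$$
\disc P(m,m+1,x) = \Delta(m,m+1) = (-1)^m \prod_{j=1}^m j^j \prod_{j=1}^{m-1}(m+1+j)^{m-j}.
$$
For a prime $q$ with $m/2 < q < m$, the factor $q^q$ appears in $\prod_{j=1}^m j^j$, and I would show (by examining which other factors $j^j$ and $(m+1+j)^{m-j}$ are divisible by $q$, using $2q > m$ so that at most a bounded number of multiples of $q$ occur among the indices) that $v_q(\disc P(m,m+1,x))$ is odd, or more precisely that it forces a slope in $\NP_q(P(m,m+1,x))$ with denominator divisible by $q$. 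The cleanest route is the one used in \cite{coleman} and \cite{hajir}: show that $\NP_q$ has a single segment of slope $-1/q$ over the first $q$ coordinates (equivalently, that $q$ exactly divides the appropriate coefficient and the Newton polygon of the relevant truncation is a line of denominator $q$), so that $q \mid N_f$; this is exactly the GLP ramification computation of \cite{hajir}, specialized to $r = m+1$. Then $q$ is in the Jordan range and Theorem \ref{hajirjordan} gives $A_m$. The same argument with $\disc Q(m,m+1,x) = \Delta(m+1,m)$ and a prime $q$ with $(m+1)/2 < q < m-1$ gives $A_{m+1}$ for the degree-$(m+1)$ polynomial.

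I expect the main obstacle to be the small-$m$ cases: Jordan's theorem and the stated range $n/2 < q < n-2$ require $n \geq 7$, and Bertrand's postulate only guarantees a prime in $(n/2, n]$, not in the open interval $(n/2, n-2)$, so for small $m$ there may be no suitable $q$ and the Newton-polygon argument fails. These finitely many cases (roughly $m \leq 6$ or so for $P$, and correspondingly for $Q$) will have to be handled by direct computation of the Galois groups, checking that each is $\supseteq A_m$ (resp.\ $\supseteq A_{m+1}$). A secondary technical point is verifying that the prime $q$ does not ramify "too much" elsewhere in a way that would collapse the slope denominator — i.e.\ confirming that the local factor at $q$ really does contribute a $q$-cycle's worth of ramification rather than splitting — but since $2q > m$ the count of $q$-divisible indices in the discriminant is small and this is a short check mirroring \cite[\S4]{hajir}.
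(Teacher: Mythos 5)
Your strategy is the paper's strategy: assume irreducibility, produce a prime in the Jordan range that divides the Newton index via a $p$-adic Newton polygon computation, invoke Theorem \ref{hajirjordan}, and dispatch small degrees by machine. The one place your plan as written would break down is the source of the prime. Bertrand's postulate only guarantees a prime $q\in(m/2,m]$, and for a prime with $m/2<q\le (2m+1)/3$ the constant term $a_0=(2m+1)(2m)\cdots(m+2)$ of $P(m,m+1,x)$ can be divisible by $q^2$, since both $2q$ and $3q$ may lie among $m+2,\dots,2m+1$. Then $(0,v_q(a_0))=(0,2)$, an intermediate coefficient of $q$-valuation $1$ (e.g. $a_1$) can fall strictly below the chord to $(q,0)$, and the resulting polygon has segments of slopes $-1$ and $-1/(q-1)$, whose denominators tell you nothing about a $q$-cycle. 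So the ``secondary technical point'' you defer is not a short check mirroring \cite{hajir}; it is exactly the obstruction that dictates the shape of the proof. The paper handles it by proving a sharper prime-existence statement than Bertrand (Lemma \ref{bert23}, via the Rosser--Schoenfeld estimates \cite{rs}): for $m\ge 21$ the interval $(2m/3,\,m-3)$ contains a prime $p$, and for such $p$ one gets $v_p(a_0)=1$ (only $2p$ occurs in the product), $v_p(a_j)\ge 1$ for $1\le j\le p-1$, and $v_p(a_j)=0$ for $j\ge p$, so $\NP_p$ is a single segment of slope $-1/p$ followed by a flat piece and $p\mid N_f$. The cost is that the finite verification runs over all $m\le 20$, not just the handful of degrees below the Jordan threshold that you anticipate. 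Finally, the aside about showing $v_q(\disc P(m,m+1,x))$ is odd is a dead end for this purpose --- discriminant squareness governs containment \emph{in} $A_m$ (and is used in Proposition \ref{diagdisc} for exactly that), not a Jordan-type lower bound --- though you correctly abandon it in favor of the Newton polygon computation.
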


We only provide the details for $P(m,m+1,x)$ since those for $Q(m,m+1,x)$ are nearly identical.  By assumption, the polynomial $P(m,m+1,x)$ is irreducible over $\Q$ of degree $m$, hence its Galois group is a transitive subgroup of $S_m$. We will show that the Newton index contains a prime $p$ in the range $m/2<p<m-2$ and apply Theorem \ref{hajirjordan}.  We start with a preliminary lemma.

\begin{lem}\label{bert23}
Let $m\geq 21$.  Then the interval $(2m/3,m-3)$ contains a prime number.
\end{lem}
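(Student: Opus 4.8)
The plan is to deduce this from an explicit version of Bertrand's postulate. The interval $(2m/3, m-3)$ has length $m/3-3$, which grows linearly in $m$, so it is ``long'' for large $m$; the only real issue is that its endpoints are in ratio tending to $3/2$, which is precisely why the classical Bertrand postulate (a prime between $k$ and $2k$) does not by itself suffice. Instead I would invoke the sharper estimate, due to Nagura, that for every real $x \ge 25$ the open interval $(x,\tfrac{6}{5}x)$ contains a prime.

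First I would dispose of all $m \ge 38$ at once. Applying Nagura's theorem with $x = 2m/3 \ge 25$ produces a prime $p$ with
\[
\frac{2m}{3} < p < \frac{6}{5}\cdot\frac{2m}{3} = \frac{4m}{5}.
\]
Since $\tfrac{4m}{5} \le m-3$ is equivalent to $m \ge 15$, this already gives $2m/3 < p < m-3$, as desired.

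Second, for the finitely many remaining values $21 \le m \le 37$ I would simply exhibit a prime in each interval $(2m/3,m-3)$: for instance $p = 17$ covers $21 \le m \le 25$, $p = 19$ covers $26 \le m \le 28$, $p = 23$ covers $29 \le m \le 34$, and $p = 29$ covers $35 \le m \le 37$, which together handle the whole range.

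There is no genuine obstacle here: the entire content is choosing a prime-gap bound of the right strength (anything at least as strong as ``a prime in $(x,\tfrac32 x)$ beyond a small threshold'' works, and Nagura's $\tfrac65 x$ comfortably does), after which only an elementary inequality and a short finite check remain. The one point to watch is stating Nagura in its open-interval, real-variable form so that the strict inequality $p > 2m/3$ comes out without fuss, and lining up the threshold $m \ge 38$ with the range $21 \le m \le 37$ covered by the direct verification.
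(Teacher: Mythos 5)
Your proof is correct, and it takes a recognizably different route from the paper's: you invoke Nagura's prime-gap theorem (a prime in $(x,\tfrac{6}{5}x)$ for $x\geq 25$), whereas the paper uses the Rosser--Schoenfeld explicit bounds $\frac{x}{\log x}<\pi(x)<1.25506\,\frac{x}{\log x}$ for $x\geq 17$ to force $\pi(m-3)-\pi(2m/3)\geq 1$. Both strategies are of the same general shape --- an explicit analytic estimate valid beyond some threshold, plus a finite verification below it --- but your choice of input is better adapted to the problem: Nagura immediately yields a prime $p$ with $2m/3<p<4m/5\leq m-3$ once $m\geq 38$, leaving only the range $21\leq m\leq 37$ to check by hand, while the paper's inequality only kicks in at $m\geq 95$ and so requires a larger computer search. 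Your explicit covering primes ($17$ for $21\leq m\leq 25$, $19$ for $26\leq m\leq 28$, $23$ for $29\leq m\leq 34$, $29$ for $35\leq m\leq 37$) all check out against the strict inequalities $2m/3<p<m-3$, and you correctly keep the endpoint inequality $4m/5\leq m-3$ strict in combination with $p<4m/5$, which sidesteps the boundary issue one would otherwise have to handle when converting a count $\pi(m-3)-\pi(2m/3)\geq 1$ into membership in the open interval. In short: same architecture, a different and somewhat more efficient key lemma.
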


\begin{proof}
Let $\pi: \R \to \NN$ be the prime counting function.  By \cite[Corollary 1]{rs}, once $x \geq17$ we have 
$$
\frac{x}{\log x} < \pi(x) < 1.25506\, \frac{x}{\log x}. 
$$
Thus, we can ensure that $(2m/3,m-3)$ contains a prime number by taking $m$ large enough so that
$$
\frac{m-3}{\log(m-3)} - 1.25506\,\frac{2m/3}{\log(2m/3)} \geq 1;
$$
it suffices to take $m \geq 95$.  A finite search among $m \leq 95$ shows that $\pi(m-4) - \pi(2m/3) \geq 1$ once $m \geq 21$.
\end{proof}

\begin{proof}[Proof of Proposition \ref{diaggal}]
By Lemma \ref{bert23},  the interval $(2m/3,m-3)$ contains a prime number once $m \geq 21$.  Fix such a prime $p$ and write 
$$
p = m-3 - k,
$$
for some $1 \leq k \leq \lfloor \frac{m}{3} \rfloor -4$.  We write $P(m,m+1,x) = \sum_{j=0}^m a_jx^j$ and recall that 
$$
a_0 = (2m+1)(2m) \cdots (m+2).
$$
Since $p<m$, we have that $v_p(a_0) \geq 1$.  But since $p>2m/3$, we have that $v_p(a_0) \leq 1$, hence $v_p(a_0) = 1$.

Next, observe that for the indices
$$
j=p, p+1,\dots,m,
$$
we have $v_p(a_j) = 0$, since neither $(2m+1-j)!/(m+1)!$ nor $\binom{m}{j}$ is divisible by $p$.   For the intermediate coefficients $a_j$ with $1 \leq j \leq p-1$, we have 
$$
a_j = (2m+1-j)(2m-j)\cdots(m+2) \binom{m}{j},
$$
and $v_p \binom{m}{j} = 1$ for each $1 \leq j \leq p-1$.  Thus $v_p(a_j) \geq 1$ for each $1 \leq j \leq p-1$.

Therefore the $p$-adic Newton Polygon of $P(m,m,x)$  is exactly the one pictured in Figure \ref{fig1}, and we conclude that $\Gal_\Q(P(m,m+1,x))$ contains $A_m$.
\begin{figure}[htbp] \label{fig1}
\begin{center}
\begin{tikzpicture}[scale=1.5,sizefont/.style={scale = 2}]
        \draw [<->,thick] (0,2) node (yaxis) [above] {}
        |- (9.5,0) node (xaxis) [right] {};
   \draw[ultra thick] (0,1.5)   --  (7,0); 
   \draw[ultra thick] (7,0)   --  (9,0); 
   \draw[dotted] (0,1.5)   --  (6.9,1.5);
   \draw (0,1.5) node {$\bullet$};
   \draw (0,1.5) node[left] {$1$};
  \draw (7,0) node {$\bullet$};
  \draw (7,0) node[below] {$p$};
 \draw (9,0) node[below] {$m$};
  \draw (9,0) node {$\bullet$};  
  \draw (4,1) node {{\tiny slope = $-1/p$}};
   
\end{tikzpicture}
\end{center}
\caption{$p$-adic Newton Polygon of $P(m,m+1)(x)$ and $Q(m,m+1)(x)$, for $m \geq 21$}
\end{figure}
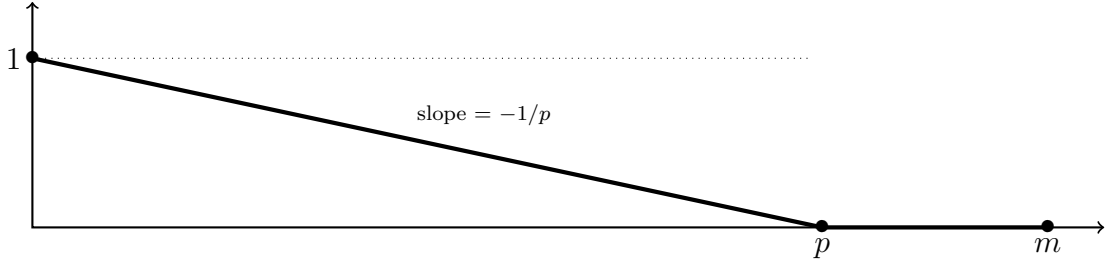
For degrees $1 \leq m \leq 20$, we easily check in \textsf{Magma} that  $\Gal_\Q(P(m,m+1,x)) \supseteq A_m$.  This completes the proof.
\end{proof}

\begin{prop}\label{diagdisc}
Let $m>1$.  Then 
\begin{enumerate}
\item $\disc P(m,m+1,x) \in \Q^{\times 2}$ if and only if $m\equiv 0\pmod{4}$ or if $(m+1)/2=(2r+1)^2$ for some $r\geq0$, \label{diagdisc2}
\item $\disc Q(m,m+1,x) \in \Q^{\times 2}$  if and only if $m+1=(2r+1)^2$ for some $r\geq0$. \label{diagdisc3}
\end{enumerate}
\end{prop}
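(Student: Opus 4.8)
The plan is to read off the class of each discriminant in $\Q^{\times}/\Q^{\times 2}$ directly from Proposition~\ref{discprop}, using that a product $\prod_i a_i^{e_i}$ of nonzero integers lies in $\Q^{\times 2}$ if and only if $\prod_{i\,:\,e_i\text{ odd}} a_i$ does. By Proposition~\ref{discprop}, $\disc P(m,m+1,x)=\Delta(m,m+1)$ and $\disc Q(m,m+1,x)=\Delta(m+1,m)$, and in $\Delta(u,v)=\varepsilon_u\prod_{j=1}^{u}j^j\prod_{j=1}^{u-1}(v+j)^{u-j}$ with $\varepsilon_u=\pm1$, the base $j$ of the first product has odd exponent exactly when $j$ is odd, while the base $v+j$ of the second has odd exponent exactly when $u-j$ is odd. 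For the two relevant pairs $(u,v)=(m,m+1)$ and $(u,v)=(m+1,m)$, the latter condition selects precisely the \emph{even} integers in a window of consecutive integers with right endpoint $2m$. So, modulo squares, $\disc P(m,m+1,x)$ and $\disc Q(m,m+1,x)$ are each $\varepsilon_u$ times the product of the odd integers in an initial segment ($\{1,\dots,m\}$ or $\{1,\dots,m+1\}$) times the product of the even integers in a terminal segment ending at $2m$.

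The key point is that these two products collapse, up to a square, to one small explicit integer. Writing the product of the odd integers up to $N$ as $N!/(2^{\lfloor N/2\rfloor}\lfloor N/2\rfloor!)$ and extracting the appropriate power of $2$ from a product of consecutive even integers, all the ambient factorials cancel against a perfect square: for $m=2t$ one gets $\disc P(m,m+1,x)\equiv\varepsilon_m\cdot((2t)!/t!)^2\equiv(-1)^t\pmod{\Q^{\times2}}$ and $\disc Q(m,m+1,x)\equiv(-1)^t(m+1)\pmod{\Q^{\times2}}$, while for $m=2t+1$ one gets $\disc P(m,m+1,x)\equiv(-1)^t\,(m+1)/2\pmod{\Q^{\times2}}$ and $\disc Q(m,m+1,x)\equiv(-1)^{t+1}\cdot 2\pmod{\Q^{\times2}}$. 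Here $\varepsilon_u=(-1)^{\binom{u}{2}}$, which I would read off from the Schur formula~(\ref{glp_disc}); tracking this sign, together with the parity of the power of $2$ produced by each even-integer product, is the one genuine bookkeeping hazard and the step I expect to need the most care.

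The conclusions then follow by inspection of these normal forms. For $m=2t$, $\disc P(m,m+1,x)$ is a square iff $(-1)^t$ is, i.e. iff $4\mid m$; for $m=2t+1$ it is a square iff $(-1)^t(m+1)/2$ is, and since $(m+1)/2$ is then odd this forces $t$ even and $(m+1)/2$ a perfect square, i.e. $(m+1)/2=(2r+1)^2$, an equation which on its own forces $m\equiv1\pmod 4$; combining the two parities yields statement~(1). Similarly, for $m$ odd $\disc Q(m,m+1,x)\equiv\pm2\notin\Q^{\times2}$, so it is never a square, and for $m=2t$ it is a square iff $(-1)^t(m+1)$ is, which (as $m+1$ is odd) forces $t$ even and $m+1$ a perfect square, i.e. $m+1=(2r+1)^2$, an equation which on its own forces $m\equiv0\pmod 8$; this gives statement~(2). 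The hypothesis $m>1$ is exactly what makes the relevant segments nonempty, so no finitely-many small cases need separate treatment.
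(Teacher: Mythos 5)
Your proposal is correct and follows essentially the same route as the paper: reduce the exponents in Proposition~\ref{discprop} modulo $2$, recognize the surviving factors as (odd integers in an initial segment)\,$\times$\,(even integers in a terminal segment ending at $2m$), and collapse via factorial identities to $(-1)^t$, $(-1)^t(m+1)/2$, $(-1)^t(m+1)$, or $(-1)^{t+1}\cdot 2$; the paper merely organizes the cases by $m \bmod 4$ instead of carrying the sign $(-1)^t$ explicitly. You are also right that the sign is $(-1)^{\binom{u}{2}}$ rather than the $(-1)^u$ printed in Proposition~\ref{discprop}, which matches the formula the paper itself uses for $\disc Q(m,m+1,x)$ in its proof.
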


\begin{proof}
For (\ref{diagdisc2}), observe that $\disc P(m,m+1,x)<0$ if $m \equiv 2,3 \pmod{4}$.  If $m \equiv 0 \pmod{4}$, then 
$$
\disc P(m,m+1,x) = \prod_{j=1}^{m}j^j\prod_{j=1}^{m-1}(m+1+j)^{m-j}.
$$
We will show $\disc P(m,m+1,x)$ is trivial in the group $\Q^{\times}/\Q^{\times 2}$.  Writing $m=4k$, we calculate directly 
\begin{align*}
\prod_{j=1}^{m}j^j\prod_{j=1}^{m-1}(m+1+j)^{m-j} &\equiv 1\cdot 3 \cdot 5 \cdots (4k-1) \times (4k+2)(4k+4) \cdots 8k \\
&\equiv \frac{(4k)!}{2\cdot 4 \cdot 6 \cdots 4k} \times\frac{2^{2k} (4k)!}{(2k)!} \\
&\equiv \frac{(4k)!}{2^{2k}(2k)!} \times\frac{2^{2k} (4k)!}{(2k)!} \pmod{\Q^{\times 2}}, 
\end{align*}
which is trivial in $\Q^{\times}/\Q^{\times 2}$. 

Next, suppose $m \equiv 1\pmod{4}$ and write $m=4r+1$.  Working modulo rational squares, we will show that $\disc P(m,m+1,x) \in \Q^{\times 2}$ if and only if $2r+1 \in \Q^{\times 2}$.  We compute:
\begin{align*}
\disc P(m,m+1,x) &\equiv 1\cdot 3 \cdot 5 \cdots (4r+1) \times (4r+4)(4r+6) \cdots (8r+2) \\
&\equiv 1\cdot 3 \cdot 5 \cdots (4r+1) \times 2^{2r} (2r+2)(2r+3)\cdots (4r+1)\\
&\equiv 1\cdot 3 \cdot 5 \cdots (4r+1) \times (2r+2)(2r+4)\cdots (4r) \times (2r+3)(2r+5)\cdots (4r+1)\\
&\equiv 1\cdot 3 \cdot 5 \cdots (2r+1) \times 2^r(r+1)(r+2)\cdots (2r)\\
&\equiv \frac{(2r+1)!}{2\cdot 4 \cdots (2r)} \times \frac{2^r (2r)!}{r!}\\
&\equiv 2r+1 \pmod{\Q^{\times 2}},
\end{align*}
from which the statement follows.

For (\ref{diagdisc3}) we have 
$$
\disc Q(m,m+1,x) = (-1)^{\binom{m+1}{2}}\prod_{j=1}^{m+1}j^j \prod_{j=1}^{m}(m+j)^{m+1-j},
$$
and similar arguments to the one above show
$$
\disc Q(m,m+1,x) \equiv
\begin{cases}
2 \pmod{\Q^\times/\Q^{\times 2}} & \text{if $m \equiv 3 \pmod 4$}\\
m+1 \pmod{\Q^\times/\Q^{\times 2}} &  \text{if $m \equiv 0 \pmod 4$}.
\end{cases}
$$
If $m \equiv 1,2 \pmod{4}$, then $\disc Q(m,m+1,x)<0$.  This completes the proof.
\end{proof}

Together with the results of Grosswald, this determines the Galois group of all the diagonal approximants.  This leaves open the question of irreducibility.  While we do not investigate the general irreducibility of the polynomials $P(m,m+1,x)$ and $Q(m,m+1,x)$ here, we expect them to be amenable to a similar analysis as in \cite{FT}.  Instead, we work out some special cases of their Eisenstein and ``near-Eisenstein'' properties.

\subsection{Factorizations mod $p$} Suppose $\lbrace f_n(x) \rbrace_{n=1}^\infty$ is a sequence of integral polynomials such that $\deg f_n(x) = n$ and let $p$ be a prime number.  Write $n$ in base $p$ as
$$
n = a_0 + a_1p +a_2p^2 + \cdots + a_rp^r.
$$
We say that  $\lbrace f_n(x) \rbrace_{n=1}^\infty$ \emph{admits a Schur factorization mod $p$} if
$$
f_n(x) \equiv f_{a_0}(x)f_{a_1}(x)^p\cdots f_{a_r}(x)^{p^r} \pmod{p}.
$$
If $\lbrace f_n(x) \rbrace_{n=1}^\infty$ admits a Schur factorization mod $p$ for all primes $p$, then we say that $\lbrace f_n(x) \rbrace_{n=1}^\infty$ \emph{has the Schur factorization property}.  Some important families of orthogonal polynomials, such as the Legendre, have the Schur factorization property \cite{carlitz}, while others, such as the Generalized Laguerre Polynomials, satisfy weaker, but still consequential, factorizations.  In particular, the GLP satisfy \cite[(4.8)]{carlitz}
$$
L^{(\alpha)}_{n+m}(x) \equiv (-x)^mL_{n}^{(\alpha)}(x) \pmod{m}
$$
whenever $\alpha$ is integral mod $m$. As regards the diagonal approximants, if we write the degree of the polynomial as 
$$
u + kp,
$$
for a prime number $p$, a non-negative integer $k$, and $u \in [0,p-1]$, then we get
\begin{align*}
P(m,m+1,x) &\equiv x^{kp}P(u,u+1,x) \pmod{p}\\
P(m,m,x) &\equiv x^{kp}P(u,u,x) \pmod{p}\\
Q(m,m+1,x) &\equiv x^{kp}Q(u,u+1,x) \pmod{p}\\
Q(m,m,x) &\equiv x^{kp}Q(u,u,x) \pmod{p}.
\end{align*}
It would be interesting to understand bases for the rings of integers in the root fields defined by the diagonal approximants, since the above factorizations would give an insight into the wild ramification in those fields.  We do not pursue that question here, but rather utilize these factorizations below in the ``near-Eisenstein'' case.

\subsection{Special Cases of Irreducibility}  Irreducibility results for parametric families of polynomials $\lambda_n^{(t)}(x) \in \Q(t)[x]$ tend to come in two flavors: ``fix $n$, vary $t$'', and ``fix $t$, vary $n$''.  In both cases, the GLP have been shown to be irreducible outside a finite set, though that finite set may not be effectively computable in some cases (see \cite{hw}).  In contrast, there are classical results of Holt \cite{holt1, holt2} and Wahab \cite{wahab1, wahab2} which give the irreducibility of the Legendre polynomials when the degree is a prime power, multiple of a prime, or within a few units of such.  We conclude this paper with two observations on specific cases of irreducibility for the diagonal approximants that are reminiscent of these.  First, we recall the Eisenstein-Dumas criterion for irreducibility \cite{dumas}:

\begin{thm}
Let $A$ be a unique factorization domain and 
$$
f(x) = \sum_{j=0}^n a_j x^j \in A[x]
$$
with $a_0a_n \ne 0$. If the Newton polygon of $f$ with respect to some prime $\p \in A$ consists of a single segment from $(0,m)$ to $(n,0)$ with $\gcd(m,n)=1$, then $f$ is irreducible in $A[x]$.
\end{thm}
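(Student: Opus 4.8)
The essential content here is irreducibility over the fraction field $K \ddef \operatorname{Frac}(A)$; the passage to $A[x]$ will come at the end via Gauss's lemma. The plan is to exploit the single slope $-m/n$ of the Newton polygon together with the coprimality $\gcd(m,n)=1$ to force every root of $f$ to generate an extension of degree exactly $n$, using the ramification-index principle already highlighted in the excerpt immediately after the Main Theorem of Newton Polygons (``the denominators of the slopes are ramification indices'').

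First I would set up the local picture. Since $A$ is a UFD and $\p$ is a prime element, the localization $A_{(\p)}$ is a discrete valuation ring with fraction field $K$ and normalized valuation $v_\p$ (so $v_\p(\p)=1$); let $K_\p$ denote its completion, a complete discretely valued field with value group $\Z$. Because $K \hookrightarrow K_\p$, it suffices to prove that $f$ is irreducible over $K_\p$, and the hypothesis is untouched by this passage: the $a_j$ lie in $A \subset K_\p$ and the Newton polygon of $f$ over $K_\p$ is still the single segment from $(0,m)$ to $(n,0)$.

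Next I would apply the Main Theorem of Newton Polygons over $K_\p$. Since the polygon is one segment of slope $-m/n$, every root $\alpha$ of $f$ in an algebraic closure $\overline{K_\p}$ satisfies $v_\p(\alpha)=m/n$, where $v_\p$ is extended to $\overline{K_\p}$. Fix such a root and set $L \ddef K_\p(\alpha)$. The value group of $L$ contains both $\Z$ and $m/n$; writing $am+bn=1$ with $a,b\in\Z$ (Bézout, using $\gcd(m,n)=1$) gives $a\cdot(m/n)+b = 1/n$, so the value group contains $\tfrac1n\Z$. Hence the ramification index $e(L/K_\p)$ is divisible by $n$, and therefore
$$
n \mid e(L/K_\p) \leq [L:K_\p] \leq \deg f = n.
$$
This forces $[L:K_\p]=n$, so the minimal polynomial of $\alpha$ over $K_\p$ has degree $n$; as $f(\alpha)=0$ and $\deg f = n$, the polynomial $f$ equals that minimal polynomial up to a scalar in $K_\p^\times$, and is thus irreducible over $K_\p$, hence over $K$.

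Finally I would descend to $A[x]$. Since $v_\p(a_n)=0$, the prime $\p$ does not divide the content of $f$; writing $f$ as its content times a primitive polynomial $f_0$ (which has the same $\p$-adic Newton polygon), Gauss's lemma upgrades irreducibility of $f_0$ over $K$ to irreducibility of $f_0$ in $A[x]$, yielding the criterion (for $f$ itself under the standard convention that $f$ is primitive). The main obstacle I anticipate is foundational rather than computational: confirming that the Newton-polygon machinery, stated in the excerpt over $\Q_p$, transfers verbatim to the complete discretely valued field $K_\p$ attached to an arbitrary UFD $A$ --- in particular that $A_{(\p)}$ is a DVR and that $v_\p$ extends uniquely to $\overline{K_\p}$ --- after which the ramification-index arithmetic is routine. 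A completion-free alternative would replace the middle steps by multiplicativity of Newton polygons: a factorization $f=gh$ over $K$ with $0<\deg g<n$ would make $\NP_\p(g)$ a single segment of slope $-m/n$ over horizontal length $\deg g$, forcing $m\deg g/n \in \Z$ and hence $n \mid \deg g$, a contradiction.
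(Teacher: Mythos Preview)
The paper does not supply its own proof of this statement; it is quoted as the classical Eisenstein--Dumas criterion with a citation to Dumas \cite{dumas}, so there is nothing to compare against directly.

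That said, your argument is correct and standard. Since $A$ is a UFD and $\p$ is prime, the localization $A_{(\p)}$ is indeed a DVR, and the Newton-polygon machinery (stated in the paper over $\Q_p$) carries over verbatim to any complete discretely valued field; the coprimality $\gcd(m,n)=1$ then forces the ramification index to be divisible by $n$, pinning down the degree. The completion-free alternative you sketch at the end---using additivity of Newton polygons under multiplication to conclude $n \mid \deg g$ for any nontrivial factor $g$---is equally valid and arguably cleaner, since it sidesteps the foundational issue you flag about extending $v_\p$ to $\overline{K_\p}$.
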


\begin{thm} \label{eisenstein}
Let $p$ be an odd prime number and $n \geq 1$.  Then the polynomials $P(p^n,p^n+1,x)$, $Q(p^n-1,p^n,x)$, $P(p^n,p^n,x)$, and $Q(p^n,p^n,x)$ are irreducible over $\Q$. 
\end{thm}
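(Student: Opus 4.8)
The plan is to prove all four polynomials irreducible over $\Q$ at once by applying the Eisenstein--Dumas criterion at the prime $p$: I will show the $p$-adic Newton polygon of each is a single segment whose horizontal and vertical extents are coprime. First I would reduce to one family. Through the identifications $P(u,v,x)=\mathcal{L}_u^{\langle v \rangle}(x)$ and $Q(u,v,x)=\mathcal{L}_v^{\langle u \rangle}(-x)$, the four polynomials are $\mathcal{L}_{p^n}^{\langle p^n \rangle}(\pm x)$, $\mathcal{L}_{p^n}^{\langle p^n+1 \rangle}(x)$, and $\mathcal{L}_{p^n}^{\langle p^n-1 \rangle}(-x)$, all of degree $p^n$; replacing $x$ by $-x$ negates coefficients without changing their $p$-adic valuations, hence without changing the Newton polygon, so it is enough to treat $f(x)\ddef\mathcal{L}_N^{\langle r \rangle}(x)$ with $N\ddef p^n$ and $\epsilon\ddef r-N\in\{-1,0,1\}$. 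By (\ref{LL}) the coefficient of $x^j$ in $f$ equals $c_j=\binom{N}{j}(2N+\epsilon-j)!/(N+\epsilon)!$; substituting Legendre's formula $v_p(k!)=(k-s_p(k))/(p-1)$, where $s_p$ is the base-$p$ digit sum, the linear parts telescope and leave
$$
(p-1)\,v_p(c_j)=(N-j)-1+s_p(j)+s_p(N-j)+s_p(N+\epsilon)-s_p(2N+\epsilon-j).
$$

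Putting $j=0$ and $j=N$ into this formula and using $s_p(p^n)=1$, $s_p(p^n+1)=2$, $s_p(p^n-1)=n(p-1)$, $s_p(2p^n)=2$, $s_p(2p^n+1)=3$, and $s_p(2p^n-1)=1+n(p-1)$, one finds uniformly in $\epsilon$ that $v_p(c_0)=(p^n-1)/(p-1)$ and $v_p(c_N)=0$. This is exactly where $p$ odd enters: for $p=2$ one would have $s_2(2^{n+1})=1$, not $2$, which alters $v_2(c_0)$ and breaks the coprimality used at the end. The heart of the argument is the claim that for every $0<j<N$ the point $(j,v_p(c_j))$ lies strictly above the segment from $(0,(p^n-1)/(p-1))$ to $(N,0)$, that is, $(p-1)\,v_p(c_j)>(p^n-1)(N-j)/N$. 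To prove this I would set $i\ddef N-j$ with $1\le i\le N-1$, note that $2N+\epsilon-j=p^n+(i+\epsilon)$, and use two elementary digit-sum identities: $s_p(p^n-i)=n(p-1)-s_p(i-1)$, since subtracting $i-1\le p^n-1$ from $p^n-1$ (all of whose base-$p$ digits equal $p-1$) involves no borrows; and $s_p(p^n+t)=1+s_p(t)$ for $0\le t\le p^n$, the case $t=p^n$ again using that $p$ is odd. After cancellation the desired inequality reduces, for each of $\epsilon=-1,0,1$, to an assertion provable from the bounds $s_p(p^n-i)\ge1$ and $s_p(i)\ge1$ (as $1\le i,\,p^n-i\le p^n-1$), the observation that $i-1\le p^n-2$ cannot have all base-$p$ digits equal to $p-1$ so that $s_p(i-1)\le n(p-1)-1$, and (when $\epsilon=1$) the trivial bound $s_p(i)-s_p(i+1)\ge-1$; in each case the leftover is $\ge i/p^n>0$.

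Granting these estimates, the $p$-adic Newton polygon of $f$ is exactly the single segment from $(0,(p^n-1)/(p-1))$ to $(p^n,0)$. Since $(p^n-1)/(p-1)=1+p+\cdots+p^{n-1}\equiv1\pmod p$, we have $\gcd\!\big((p^n-1)/(p-1),\,p^n\big)=1$, and $c_0c_N\ne0$; hence the Eisenstein--Dumas criterion applies over $\Z$, and since $f$ is primitive ($c_N=\pm1$) it is irreducible over $\Q$. Unwinding the identifications gives the theorem. The main obstacle is the combinatorial bookkeeping in the second paragraph: one must track the base-$p$ digit sums of $j$, $N-j$, and $2N+\epsilon-j$, together with the carries relating them, through all three values of $\epsilon$, and check that the single unit of slack in $s_p(i-1)\le n(p-1)-1$ suffices each time. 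The endpoints $j\in\{0,N\}$ (where equality holds) and, for $\epsilon=-1$, the boundary input $i=0$, where $2N+\epsilon-j=p^n-1$ falls outside the range of the identity $s_p(p^n+t)=1+s_p(t)$, are cleanest handled directly; it is also worth isolating the two digit-sum identities as a short preliminary lemma.
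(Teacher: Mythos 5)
Your proposal is correct, and its core is the same as the paper's: show the $p$-adic Newton polygon is a single segment from $(0,(p^n-1)/(p-1))$ to $(p^n,0)$ and invoke Eisenstein--Dumas, using Legendre's formula and base-$p$ digits to control $v_p(a_j)$. I checked your key estimates and they do go through: your general formula $(p-1)v_p(c_j)=(N-j)-1+s_p(j)+s_p(N-j)+s_p(N+\epsilon)-s_p(2N+\epsilon-j)$ is right, and for each $\epsilon\in\{-1,0,1\}$ the required inequality reduces (after $s_p(p^n+t)=1+s_p(t)$ and, for $\epsilon=-1$, the no-borrow identity) to $s_p(j)\ge 1$, $s_p(j)\le 1+s_p(j-1)$ applied to $N+1-j=(N-j)+1$, or $2s_p(j)+s_p(N-j)\ge 2$, respectively, with integrality forcing strictness off the endpoints. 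Where you genuinely diverge from the paper: (i) the paper treats $P(p^n,p^n,x)$ and $Q(p^n,p^n,x)$ by citing Filaseta--Trifonov on Bessel polynomials and only runs the Eisenstein--Dumas argument for $P(p^n,p^n+1,x)$ (leaving $Q(p^n-1,p^n,x)$ to the reader), whereas your $\epsilon$-parametrization handles all four families in one uniform computation, making the theorem self-contained; (ii) the paper's Lemma 2.13 derives an \emph{exact} formula $v_p(a_j)=v_p(a_0)+v_p(j-1)-v_p(j!)$ via a three-way case split on $j\bmod p$, which is more information than your one-sided bound but costs more bookkeeping (and, as stated, degenerates at $j=1$), while your telescoped digit-sum identity avoids the case split entirely. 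The only caveat is that your second paragraph defers the final case-by-case verification ("granting these estimates"); those verifications are short and correct, but they should be written out, since they are the entire content of the single-segment claim.
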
 

The irreducibility of $P(p^n,p^n,x)$ and $Q(p^n,p^n,x)$ is already known  by \cite{FT} since they are Bessel Polynomials.  For the polynomials $P(p^n,p^n+1,x)$ and $Q(p^n-1,p^n,x)$, we will show that they are Eisenstein-Dumas at $p$.  We will give all details of the proof for $P(p^n,p^n+1,x)$ and leave the case of $Q(p^n-1,p^n,x)$ as an exercise for the reader -- the calculations are nearly identical. However, because $P(u,v,x) = Q(v,u,-x)$, we immediately get the following corollary.

\begin{cor}
Let $p$ be an odd prime number and $n \geq 1$.  Then the polynomials $P(p^n,p^n-1,x)$, $Q(p^n,p^n+1,x)$ are irreducible over $\Q$. 
\end{cor}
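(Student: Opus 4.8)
The plan is to deduce the corollary formally from Theorem~\ref{eisenstein} together with the reflection identity~(\ref{identity}), $P(u,v,x)=Q(v,u,-x)$. The one structural remark I need is that the $\Q$-algebra automorphism of $\Q[x]$ sending $x\mapsto -x$ preserves irreducibility: it carries a factorization $g=g_1g_2$ to a factorization $g(-x)=g_1(-x)g_2(-x)$ with no change in degrees, and it is its own inverse, so $g(x)$ is irreducible over $\Q$ if and only if $g(-x)$ is.

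I would then specialize~(\ref{identity}) twice. With $(u,v)=(p^n,p^n-1)$ it reads $P(p^n,p^n-1,x)=Q(p^n-1,p^n,-x)$; since $Q(p^n-1,p^n,x)$ is irreducible by Theorem~\ref{eisenstein} (being Eisenstein--Dumas at $p$), the remark above gives the irreducibility of $P(p^n,p^n-1,x)$. Written in the equivalent form $Q(u,v,x)=P(v,u,-x)$ with $(u,v)=(p^n,p^n+1)$, the identity reads $Q(p^n,p^n+1,x)=P(p^n+1,p^n,-x)$, so it remains only to establish the irreducibility of $P(p^n+1,p^n,x)$. For this I would rerun the Newton-polygon argument from the proof of Theorem~\ref{eisenstein} with the pair $(p^n+1,p^n)$ in place of $(p^n,p^n+1)$: compute the $p$-adic valuation of the constant term $a_0$ (now a product of $p^n+1$ consecutive integers straddling $2p^n$); check that $v_p(a_j)=0$ for the top coefficients and that every intermediate point $(j,v_p(a_j))$ lies on or above the segment joining $(0,v_p(a_0))$ to $(p^n+1,0)$; conclude that this segment is exactly the $p$-adic Newton polygon; and apply the Eisenstein--Dumas criterion.

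The main --- and essentially the only --- obstacle is that last invocation, which requires $\gcd(v_p(a_0),p^n+1)=1$. In the $P(p^n,p^n+1,x)$ case of Theorem~\ref{eisenstein} the analogous gcd is automatically $1$ because the degree is the prime power $p^n$ and $p\nmid v_p(a_0)$; against $p^n+1$ this coprimality is no longer forced, so one must compute $v_p(a_0)$ explicitly as a function of $n$, track its parity, and deal with any residue classes of $n$ for which the Eisenstein--Dumas hypothesis fails by a separate argument. Everything else is a pair of one-line substitutions into the explicit coefficient formulas, plus the elementary remark about $x\mapsto -x$.
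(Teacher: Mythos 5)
Your treatment of $P(p^n,p^n-1,x)$ is exactly the paper's argument: the corollary is justified there by the single observation that $P(u,v,x)=Q(v,u,-x)$, so $P(p^n,p^n-1,x)=Q(p^n-1,p^n,-x)$ inherits irreducibility from $Q(p^n-1,p^n,x)$ of Theorem~\ref{eisenstein}, together with the (correct) remark that $x\mapsto -x$ preserves irreducibility. That half of your proposal is complete and matches the paper.

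The second half is not a proof, and the obstacle you flag at the end is genuine rather than a loose end one can expect to tidy up. For $P(p^n+1,p^n,x)$ the constant term is $(2p^n+1)!/(p^n)!$, so $v_p(a_0)=\frac{p^n-1}{p-1}$, while the degree is $p^n+1$. Since $(p-1)\cdot\frac{p^n-1}{p-1}=(p^n+1)-2$, one gets $\gcd\bigl(\tfrac{p^n-1}{p-1},\,p^n+1\bigr)\mid 2$, and because $1+p+\cdots+p^{n-1}\equiv n\pmod 2$ and $p^n+1$ is even, the gcd equals $2$ for every even $n$. In those cases the Eisenstein--Dumas hypothesis fails outright: even if the Newton polygon is a single segment, it passes through the lattice point $\bigl(\tfrac{p^n+1}{2},\tfrac{p^n-1}{2(p-1)}\bigr)$ and only excludes $\Q_p$-factors of degree not divisible by $\tfrac{p^n+1}{2}$, leaving open a splitting into two factors of that degree. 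So the ``separate argument'' you defer to is the entire difficulty for infinitely many $n$, and nothing in your sketch supplies it. You have, however, put your finger on a real defect in the statement itself: applying $x\mapsto -x$ to the two Eisenstein--Dumas polynomials of Theorem~\ref{eisenstein} yields $P(p^n,p^n-1,x)$ and $Q(p^n+1,p^n,x)$, both of degree $p^n$, not $Q(p^n,p^n+1,x)$, which has degree $p^n+1$. Either the corollary intends $Q(p^n+1,p^n,x)$ (in which case everything really is immediate and your detour through $P(p^n+1,p^n,x)$ is unnecessary), or it asserts something the quoted identity does not deliver, and your computation shows the gap is substantive precisely when $n$ is even.
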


 Our strategy of proof boils down to two steps, since $P(p^n,p^n+1,x)$ is monic. First, we will show that $v_p(a_0)$ is coprime to $p$ and second we will show that the $p$-adic Newton Polygon consists of a single segment.  For notational ease, we break the proof of Theorem \ref{eisenstein} into several lemmas and we begin by setting some notation.

Let $p$ be an odd prime, fix $n \geq 1$, and write $P(p^n,p^n+1,x) = \sum_{j=0}^{p^n} a_jx^j$.  

\begin{lem}
With all notation as above, $v_p(a_0)$ is coprime to $p$.
\end{lem}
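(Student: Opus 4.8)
The plan is to write down $a_0$ explicitly from the formula \eqref{first_pade} and compute its $p$-adic valuation using Legendre's formula for $v_p(N!)$. From \eqref{first_pade} with $u = p^n$ and $v = p^n + 1$, the constant term is $a_0 = \frac{(2p^n+1)!}{(p^n+1)!}$, i.e. $a_0 = (2p^n+1)(2p^n) \cdots (p^n+2)$, the product of the $p^n$ consecutive integers from $p^n+2$ to $2p^n+1$. So the first step is simply $v_p(a_0) = v_p((2p^n+1)!) - v_p((p^n+1)!)$.

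Next I would apply Legendre's formula $v_p(N!) = \sum_{i \geq 1} \lfloor N/p^i \rfloor$. For $N = 2p^n+1$ one gets $\lfloor (2p^n+1)/p^i \rfloor = 2p^{n-i}$ for $1 \leq i \leq n$ (since the $+1$ never pushes the quotient up, as $p$ is odd and $p \nmid 1$) and $0$ for $i > n$; similarly $\lfloor (p^n+1)/p^i \rfloor = p^{n-i}$ for $1 \leq i \leq n$ and $0$ for $i > n$. Subtracting termwise gives
\[
v_p(a_0) = \sum_{i=1}^{n} \bigl(2p^{n-i} - p^{n-i}\bigr) = \sum_{i=1}^{n} p^{n-i} = 1 + p + p^2 + \cdots + p^{n-1} = \frac{p^n - 1}{p - 1}.
\]
Finally, I would observe that this quantity is coprime to $p$: it is a sum of $n$ terms, exactly one of which ($p^0 = 1$) is not divisible by $p$, so $v_p(a_0) \equiv 1 \pmod{p}$, hence $\gcd(v_p(a_0), p) = 1$.

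The only point requiring care — and the main (minor) obstacle — is the verification that the floor functions behave as claimed, i.e. that the $+1$ in $2p^n+1$ and $p^n+1$ never contributes to any floor $\lfloor \cdot / p^i \rfloor$ for $i \le n$. This holds because $2p^n + 1 = 2p^{n-i}\cdot p^i + 1$ with $0 \le 1 < p^i$ for $i \ge 1$, and likewise $p^n + 1 = p^{n-i}\cdot p^i + 1$; it is here that the hypothesis $p \ne 2$ is used in the form $2p^{n-i} < p^{n-i+1}$ so that the leading quotients are exactly $2p^{n-i}$ and $p^{n-i}$ rather than carrying over. Everything else is a direct computation.
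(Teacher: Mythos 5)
Your proof is correct and takes essentially the same route as the paper: both reduce to $v_p(a_0) = v_p((2p^n+1)!) - v_p((p^n+1)!)$ and evaluate it via Legendre's formula, obtaining $v_p(a_0) = \tfrac{p^n-1}{p-1} \equiv 1 \pmod{p}$; the paper uses the base-$p$ digit-sum form $v_p(m!) = \tfrac{m - s_p(m)}{p-1}$ while you use the equivalent floor-sum form, and your careful remark about where $p \neq 2$ enters (so that $2p^n+1 < p^{n+1}$ and no extra term appears) is a welcome detail the paper leaves implicit.
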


\begin{proof}
We write
\begin{align*}
v_p(a_0) &= v_p \left( \frac{(2p^n+1)!}{(p^n+1)!} \right) = v_p ((2p^n+1)!) - v_p ((p^n+1)!)
\end{align*}
and appeal to the well-known fact that if we write $m$ in base $p$ as
$$
m = \sum_{j=0}^rb_jp^j,
$$
then 
$$
v_p(m!) = \frac{m - (\sum_{j=0}^rb_j)}{p-1}.
$$
Applied to $(2p^n+1)!$ and $(p^n+1)!$, we get
\begin{align*}
v_p(a_0) &= \frac{2p^n +1 - (1+2)}{p-1} - \frac{p^n+1-(1+ 1)}{p-1} = \frac{p^n- 1}{p-1},
\end{align*}
which is coprime to $p$.
\end{proof}

\begin{lem} \label{ajlem}
With all notation as above, we have 
$$
v_p(a_j) = v_p(a_0) + v_p(j-1) - v_p(j!), 
$$
for $j=1,\dots,p^n$.
\end{lem}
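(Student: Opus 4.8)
The plan is to read off the coefficients directly from the defining formula~(\ref{first_pade}): taking $u=p^{n}$ and $v=p^{n}+1$ gives
$$
a_{j}=\frac{(2p^{n}+1-j)!}{(p^{n}+1)!}\binom{p^{n}}{j},\qquad a_{0}=\frac{(2p^{n}+1)!}{(p^{n}+1)!},
$$
so that, cancelling the common factor $(p^{n}+1)!$,
$$
v_{p}(a_{j})-v_{p}(a_{0})=v_{p}\!\left(\frac{(2p^{n}+1-j)!}{(2p^{n}+1)!}\right)+v_{p}\binom{p^{n}}{j}=-\sum_{i=2p^{n}+2-j}^{2p^{n}+1}v_{p}(i)+v_{p}\binom{p^{n}}{j}.
$$
Thus the lemma reduces to two elementary valuation computations: one for the binomial coefficient, and one for the product of the $j$ consecutive integers $2p^{n}+2-j,\dots,2p^{n}+1$.

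For the binomial coefficient I would use $\binom{p^{n}}{j}=\frac{p^{n}}{j}\binom{p^{n}-1}{j-1}$ together with the fact that $\binom{p^{n}-1}{j-1}$ is a $p$-adic unit: since every base-$p$ digit of $p^{n}-1$ equals $p-1$, adding $j-1$ and $(p^{n}-1)-(j-1)$ in base $p$ produces no carries, so by Kummer's theorem $v_{p}\binom{p^{n}-1}{j-1}=0$; hence $v_{p}\binom{p^{n}}{j}=n-v_{p}(j)$ for $1\le j\le p^{n}$. For the product, the point is that $p$ is \emph{odd}: then $2p^{n}+1$ is a unit, and among $i=2p^{n}+2-j,\dots,2p^{n}+1$ the only index of valuation $\ge n$ is $i=2p^{n}$, which contributes $v_{p}(2p^{n})=n$; every remaining index is of the form $2p^{n}-(s-1)$ with $1\le s-1\le j-2\le p^{n}-2$, and since $v_{p}(2p^{n})=n>v_{p}(s-1)$ we get $v_{p}\bigl(2p^{n}-(s-1)\bigr)=v_{p}(s-1)$. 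Summing over the range gives $\sum_{i=2p^{n}+2-j}^{2p^{n}+1}v_{p}(i)=n+\sum_{t=1}^{j-2}v_{p}(t)=n+v_{p}\bigl((j-2)!\bigr)$ for $j\ge 2$.

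Substituting both evaluations into the displayed identity yields
$$
v_{p}(a_{j})-v_{p}(a_{0})=\bigl(n-v_{p}(j)\bigr)-\bigl(n+v_{p}((j-2)!)\bigr)=-v_{p}(j)-v_{p}\bigl((j-2)!\bigr),
$$
and since $v_{p}(j!)=v_{p}(j)+v_{p}(j-1)+v_{p}\bigl((j-2)!\bigr)$, the right-hand side is exactly $v_{p}(j-1)-v_{p}(j!)$, as claimed. The small indices $j=1,2$ I would dispatch by hand: $v_{p}(a_{1})-v_{p}(a_{0})=v_{p}\bigl(p^{n}/(2p^{n}+1)\bigr)=n$ and $v_{p}(a_{2})-v_{p}(a_{0})=v_{p}\bigl(\tfrac{1}{2}p^{n}(p^{n}-1)(2p^{n}-1)!/(2p^{n}+1)!\bigr)=0$, both in agreement with the stated formula (for $j=1$, reading the term $v_{p}(j-1)$ as $n$).

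There is no deep obstruction here; the only place a hypothesis is genuinely used is the oddness of $p$, which both makes $2p^{n}+1$ a $p$-adic unit and guarantees that $2p^{n}$ occurs exactly once in the telescoping range, so that range contains at most one index of valuation $\ge n$. The one point that needs a little care is keeping the shifted indices in the regime $s-1<p^{n}$, so that their $p$-adic valuations are not perturbed by the $2p^{n}$ term — this is precisely what the constraint $j\le p^{n}$ supplies.
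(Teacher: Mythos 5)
Your proof is correct, and it takes a genuinely different route from the paper's. The paper computes $v_p(a_j)$ via Legendre's digit-sum formula $v_p(m!)=\frac{m-s_p(m)}{p-1}$ applied to $(2p^n+1-j)!$, which forces an explicit base-$p$ expansion of $2p^n+1-j$ and a three-way case split according to $j\bmod p$ (namely $j\equiv 0$, $j\equiv 1$, and $j\equiv c_0\neq 0,1$). You instead telescope $(2p^n+1-j)!/(2p^n+1)!$ into the product of the $j$ consecutive integers $2p^n+2-j,\dots,2p^n+1$, observe that (because $p$ is odd and $j\le p^n$) the unique factor of valuation $\ge n$ in that window is $2p^n$, and that every factor $2p^n-t$ with $1\le t\le j-2<p^n$ has valuation exactly $v_p(t)$; combined with $v_p\binom{p^n}{j}=n-v_p(j)$ (which the paper also uses, and which your Kummer/no-carries argument justifies cleanly), this gives $v_p(a_j)-v_p(a_0)=-v_p(j)-v_p((j-2)!)=v_p(j-1)-v_p(j!)$ with no casework. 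Your approach is shorter and, to my eye, more transparent about \emph{why} the term $v_p(j-1)$ appears: it is the valuation of the factor $2p^n-(j-1)$ that drops out of the window when passing from $a_{j-1}$ to $a_j$. Your flag about $j=1$ is also legitimate: the stated identity literally involves $v_p(0)$ there, and the paper's own Case 2 degenerates at $j=1$ (empty digit sum, undefined $s$); your direct check that $v_p(a_1)=v_p(a_0)+n$ is the honest resolution, and it does not affect the later Newton-polygon estimate, where only a lower bound on $v_p(a_1)$ is needed.
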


\begin{proof}
Fix an index $j$, and write $j$ and $2p^n+1-j$ in base $p$ as
\begin{align*}
j &= \sum_{k=0}^{n} c_kp^k \\
2p^n+1-j &= \sum_{k=0}^{n} u_kp^k,
\end{align*}
respectively.  Since
$$
a_j = \frac{(2p^n+1-j)!}{(p^n+1)!} \binom{p^n}{j},
$$
we have
\begin{align} \label{vpaj}
v_p(a_j) &= v_p \left( \frac{(2p^n+1- j)!}{(p^n+1)!} \right) + v_p \binom{p^n}{j}  = \frac{2p^n +1 -j - \left[\sum_{k} u_k\right]}{p-1} - \frac{p^n-1}{p-1} + n - v_p(j). 
\end{align}
To finish the proof, we break our argument into three cases.  In each one, we deduce a formula for $\sum_{k }u_k$ which allows us to conclude $v_p(a_j) = v_p(a_0) + v_p(j-1) - v_p(j!)$. \\

\noindent \textbf{Case 1: $p \equiv 0 \pmod{p}$}.  Then we write 
\begin{align*}
j &=\sum_{k=s}^{n-1} c_kp^k \\
2p^n + 1 - j &= \sum_{k=0}^n u_kp^k = 1 + \left(\sum_{k=s}^{n-1} u_{k}p^k \right) + p^n,
\end{align*}
where 
\begin{align*}
u_s &= p-c_s,\\
u_{k} &=p-1-c_k,\text{ for $k=s+1,\dots,n-1$};
\end{align*}
note that $v_p(j) = s$.  Then 
$$
\sum_{k=0}^n u_k = 1 + (p-c_s) + \sum_{k=s+1}^{n-1} (p-1-c_k)  +1 = 3+(n-s)(p-1) - \sum_{k=s}^{n-1}c_k.
$$
Substituting this into (\ref{vpaj}) gives
\begin{align*}
v_p(a_j) &=  \frac{2p^n +1 -j - \left[3+(n-s)(p-1) - \sum_{k=s}^{n-1}c_k\right]}{p-1} - \frac{p^n-1}{p-1} + n - v_p(j) \\
&=\frac{p^n-1}{p-1} -n + s - \left( \frac{j - \sum_k c_k}{p-1} \right) +n - s = v_p(a_0) - v_p(j!).
\end{align*}
Since $v_p(j-1) = 0$, this agrees with the statement of the Lemma. \\

\noindent \textbf{Case 2: $j \equiv 1 \pmod{p}$.}  We proceed similarly and write 
\begin{align*}
j &=1+ \sum_{k=s}^{n-1} c_kp^k \\
2p^n + 1 - j &= \sum_{k=0}^n u_kp^k = \left(\sum_{k=s}^{n-1} u_{k}p^k \right) + p^n,
\end{align*}
where 
\begin{align*}
u_s &= p-c_s,\\
u_{k} &=p-1-c_k,\text{ for $k=s+1,\dots,n-1$};
\end{align*}
note that $v_p(j-1) = s$ and $v_p(j) = 0$.  Then
$$
\sum_{k=s}^n u_k = (p-c_s) + \sum_{k=s+1}^{n-1} (p-1-c_k) + 1 = 1 + (n-s)(p-1) + 1 - \sum_{k=s}^{n-1} c_k, 
$$
and so 
\begin{align*}
v_p(a_j) &=  \frac{2p^n +1 -j - \left[ 1 + (n-s)(p-1) + 1 - \sum_{k=s}^{n-1} c_k  \right]}{p-1} - \frac{p^n-1}{p-1} + n - v_p(j) \\
&= \frac{p^n -1}{p-1} -n + s - \left(\frac{j - 1 - \sum_{k=s}^{n-1} c_k}{p-1} \right) +n = v_p(a_0) + v_p(j-1) -v_p(j!).
\end{align*}

\noindent \textbf{Case 3: $j \equiv c_0 \pmod{p}$, $c_0 \ne 0,1$.}  This is nearly identical to the other cases with the restriction that $c_0 \not \in \lbrace 0, 1 \rbrace$ and with $u_k$ given by
\begin{align*}
u_0 &= p+1-c_0,\\
u_{k} &=p-1-c_k,\text{ for $k=s+1,\dots,n-1$}\\
u_n &= 1.
\end{align*}
Observe that $v_p(j) = v_p(j-1) = 0$.  Following the same approach as in the previous cases we first evaluate
$$
\sum_{k=0}^n u_k = p+1-c_0 + \sum_{k=0}^{n-1} (p-1 - c_k)  +1 = 3 + n(p-1) - \sum_{k=0}^{n-1}c_k
$$
and then compute 
\begin{align*}
v_p(a_j) &=  \frac{2p^n +1 -j - \left[  3 + n(p-1) - \sum_k c_k   \right]}{p-1} - \frac{p^n-1}{p-1} + n - v_p(j) \\
&=\frac{p^n -1}{p-1} -n - \left(\frac{j - 1 - \sum_{k=s}^{n-1} c_k}{p-1} \right) +n = v_p(a_0)  -v_p(j!).
\end{align*}
This completes the proof.
\end{proof}

\begin{lem}
The $p$-adic Newton Polygon of $P(p^n,p^n+1,x)$ consists of a single segment.
\end{lem}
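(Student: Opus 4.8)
Since $P(p^n,p^n+1,x)$ is monic we have $a_{p^n}=1$, so $v_p(a_{p^n})=0$, and by the first Lemma $v_p(a_0)=\tfrac{p^n-1}{p-1}<\infty$. Hence the two ``extreme'' points of the Newton polygon are $A=\bigl(0,\tfrac{p^n-1}{p-1}\bigr)$ and $B=(p^n,0)$, and the polygon will consist of the single segment $AB$ precisely when every point $(j,v_p(a_j))$ with $1\le j\le p^n-1$ lies on or above the line through $A$ and $B$. That line has slope $-\tfrac{p^n-1}{p^n(p-1)}$, so its height above the abscissa $j$ equals $\tfrac{p^n-1}{p-1}\cdot\tfrac{p^n-j}{p^n}=\tfrac{(p^n-1)(p^n-j)}{p^n(p-1)}$. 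The plan is to check this lower bound using Lemma~\ref{ajlem}.

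\textbf{Reduction to an elementary inequality.} First I would recall Legendre's formula: writing $s_p(j)$ for the sum of the base-$p$ digits of $j$, one has $v_p(j!)=\tfrac{j-s_p(j)}{p-1}$. Substituting this together with $v_p(a_0)=\tfrac{p^n-1}{p-1}$ into the identity $v_p(a_j)=v_p(a_0)+v_p(j-1)-v_p(j!)$ of Lemma~\ref{ajlem} gives
\begin{equation*}
v_p(a_j)=\frac{p^n-1-j+s_p(j)}{p-1}+v_p(j-1).
\end{equation*}
On the other hand, expanding $(p^n-1)(p^n-j)$ and dividing by $p^n$ shows the height of segment $AB$ at $j$ equals $\tfrac{p^n-1-j+j/p^n}{p-1}$. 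Subtracting, the desired inequality $v_p(a_j)\ge$ (height of $AB$ at $j$) is equivalent, after multiplying through by $p-1>0$, to
\begin{equation*}
(p-1)\,v_p(j-1)+s_p(j)\ \ge\ \frac{j}{p^n}.
\end{equation*}

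\textbf{Verifying the inequality and concluding.} For $1\le j\le p^n-1$ we have $s_p(j)\ge 1$ (a positive integer has digit sum at least $1$) and $v_p(j-1)\ge 0$, while $j/p^n<1$; so the left side is $\ge 1>j/p^n$ and the inequality holds \emph{strictly}. Thus each intermediate vertex lies strictly above $AB$, the Newton polygon of $P(p^n,p^n+1,x)$ is exactly the segment from $\bigl(0,\tfrac{p^n-1}{p-1}\bigr)$ to $(p^n,0)$, and we are done. I would then remark that, combined with the previous lemma (which gives $\gcd\!\bigl(\tfrac{p^n-1}{p-1},p^n\bigr)=1$ since $\tfrac{p^n-1}{p-1}\equiv 1\pmod p$), the Eisenstein–Dumas criterion applies and yields the irreducibility of $P(p^n,p^n+1,x)$. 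There is no real obstacle here: the only points requiring care are the correct value of $v_p(a_0)$ and the bookkeeping in translating Lemma~\ref{ajlem} and Legendre's formula into the clean inequality above; the positivity $s_p(j)\ge 1$ then does all the work.
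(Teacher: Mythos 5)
Your proof is correct and follows essentially the same route as the paper: both substitute Legendre's formula and the identity $v_p(a_j)=v_p(a_0)+v_p(j-1)-v_p(j!)$ from Lemma~\ref{ajlem} into the height of the segment from $\bigl(0,\tfrac{p^n-1}{p-1}\bigr)$ to $(p^n,0)$, reduce to the inequality $(p-1)v_p(j-1)+s_p(j)\ge j/p^n$, and conclude from $s_p(j)\ge 1 > j/p^n$. The closing remark about Eisenstein--Dumas matches how the paper uses this lemma in the proof of Theorem~\ref{eisenstein}.
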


\begin{proof}
Specifically, we will show that the Newton Polygon is exactly 
\begin{center}
\begin{tikzpicture}[scale=1.5,sizefont/.style={scale = 2}]
        \draw [<->,thick] (0,2) node (yaxis) [above] {}
        |- (9,0) node (xaxis) [right] {};
   \draw[ultra thick] (0,1.5)   --  (7,0); 
   \draw[dotted] (0,1.5)   --  (6.9,1.5);
   \draw (0,1.5) node {$\bullet$};
   \draw (0,1.5) node[left] {$\frac{p^n-1}{p-1}$};
  \draw (7,0) node {$\bullet$};
  \draw (7,0) node[below] {$p^n$};
  \draw (4,1) node {{\tiny slope = $-\frac{p^n-1}{p^{n+1}-p^n}$}};
   
\end{tikzpicture}
\end{center}
by showing that $v_p(a_j)$ lies above the indicated line, \emph{i.e.}~that
\begin{align}\label{ineq}
v_p(a_j) \geq \frac{(p^n-1)\left(1 - \frac{j}{p^n}\right)}{p-1}
\end{align}
for $j=1,\dots,p^n-1$.  By Lemma \ref{ajlem}
$$
v_p(a_j) = \frac{p^n-1- j + \sum_kc_k}{p-1} + v_p(j-1),
$$
so (\ref{ineq}) is equivalent to showing 
\begin{align} \label{ineq2}
(p-1)v_p(j-1) + \sum_k c_k \geq \frac{j}{p^n}.
\end{align}
This is clear because the sum on the left is at least 1, and the fraction on the right is at most $\frac{p^n-1}{p^n}$.
\end{proof}

With Theorem \ref{eisenstein} proved, we can give good evidence for irreducibility when the degree is within 1 of a prime as well.  To do this, we recall the notion of \emph{flatness} and \emph{steepness} of a Newton Polygon \cite[Defn.~2.1]{bh} as well as the main result of \cite{bh}.  We import their notation: let $K$ be a field eqipped with a normalized discrete valuation $v: K \to \Z$ and let $K_v$ be the completion of $K$ with respect to $v$; for $f \in K_v[x]$, let $\NP_v(f)$ denote the Newton Polygon of $f$ with respect to $v$.

\begin{defn}[Definition 2.1 of \cite{bh}]
For $f \in K_v[x]$, let $\lambda_v(f) \geq 0$ be the length of the slope 0 segment of $\NP_v(f)$; if none of the slopes is 0, $\lambda_v(f) = 0$. We call $\lambda_v(f)$ the \emph{flatness} of $f$ with respect to $v$. We define $\mu_v(f) = \max_{1\leq i \leq r} |m_i|$ where $m_1,\dots,m_r$ are the slopes of $\NP_v(f)$ and call it the \emph{steepness} of $f$ with respect to $v$. We say that $\NP_v(f)$ is non-trivial if $\mu_v(f)\ne 0$. Define
$$
\ell_v(f) \ddef \lbrace i \in [0,n] \cap \Z : v(a_i) = \min_{0 \leq j \leq n} v(a_j)\rbrace. 
$$
\end{defn}

\begin{defn}[Definition 2.3 of \cite{bh}]
A polynomial $\tilde{f}(x) = \sum_{i=0}^n \tilde{a}_i x^i \in K_v[x]$ is called a $v$-admissible deformation of $f(x) =\sum_{i=0}^n a_ix^i$ if
\begin{itemize}
\item $v(\tilde{a}_i) \geq v(a_i)$ for all $0 \leq i \leq n$, 
\item for $i=0$, for $i=n$, and for some integer $i \in \ell_v(f)$, $v(\tilde{a}_i) = v(a_i)$.
\end{itemize}
\end{defn}

\begin{thm}[Corollary 2.7 of \cite{bh}] \label{bhthm}
Suppose $f(x) =\sum_{i=0}^n a_ix^i \in K_v[x]$ is $v$-monic and $v$-integral, and satisfies $v(a_j) > 0$ for $0 \leq j \leq n - u - 1$. If the left-most slope of $\NP_v(f)$ is $-\mu$, then no $v$-admissible deformation of $f$ admits a factor in $K_v [x]$ with degree belonging to $(u, \mu^{-1} )$.
\end{thm}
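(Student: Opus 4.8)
The plan is to argue by contradiction, turning the statement into bookkeeping about Newton polygons. Suppose some $v$-admissible deformation $\tilde f = \sum_{i=0}^n \tilde a_i x^i$ has a factor $g \in K_v[x]$ with $d \ddef \deg g \in (u, \mu^{-1})$, and write $\tilde f = gh$. First I would dispose of the case $u \geq n$: then $d > n = \deg \tilde f$ is impossible, so there is nothing to prove, and we may assume $u \leq n-1$. In that case the hypothesis $v(a_j) > 0$ for $0 \leq j \leq n-u-1$ in particular gives $v(a_0) > 0$, while $v$-monicity gives $v(a_n) = 0$; hence $\min_j v(a_j) = 0$, and this minimum is attained only at indices $\geq n-u$.

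Next I would record the shape of $\NP_v(\tilde f)$, valid for every $v$-admissible deformation. Since $v(\tilde a_i) \geq v(a_i) \geq 0$ with equality at $i=0$ and $i=n$, the polygon $\NP_v(\tilde f)$ runs from $(0,v(a_0))$ down to $(n,0)$ and stays on or above the $x$-axis; being convex it is non-increasing, so all of its slopes are $\leq 0$ and its unique slope-$0$ edge, if present, is the terminal one, ending at $(n,0)$. Moreover $\NP_v(\tilde f)$ lies weakly above $\NP_v(f)$ and has the same left endpoint, so its left-most --- hence steepest --- slope is $\geq -\mu$; thus \emph{every} slope of $\NP_v(\tilde f)$ lies in $[-\mu,0]$. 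Finally, since $v(\tilde a_j) \geq v(a_j) > 0$ for $j \leq n-u-1$, the only indices with $v(\tilde a_j) = 0$ are $\geq n-u$, the least of them is a vertex of $\NP_v(\tilde f)$, and from it the polygon runs flat to $(n,0)$; hence $\lambda_v(\tilde f) \leq u$.

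Then I would extract the contradiction. Because $\tilde a_0 = \tilde f(0) \neq 0$, both $g(0) \neq 0$ and $h(0) \neq 0$, so $\NP_v(g)$ genuinely spans $[0,d]$ and the multiset of slopes of $\NP_v(\tilde f)$ is the disjoint union of the slope multisets of $\NP_v(g)$ and $\NP_v(h)$. In particular every slope of $\NP_v(g)$ lies in $[-\mu,0]$, so the total drop of $\NP_v(g)$ --- a difference of two valuations, hence a non-negative integer --- is at most $d\mu < 1$ and therefore equal to $0$. A convex polygon with only non-positive slopes and zero total drop is flat, so $\NP_v(g)$ is a single slope-$0$ segment of length $d$, giving $\lambda_v(g) = d$. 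Then $\lambda_v(\tilde f) = \lambda_v(g) + \lambda_v(h) \geq d > u$, contradicting $\lambda_v(\tilde f) \leq u$.

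I do not expect a serious obstacle here: the routine ingredients (convexity of Newton polygons, additivity of slope multisets under multiplication, invariance of endpoints under admissible deformation) are standard, and the crux is simply the integrality observation that a drop strictly less than $1$ must be $0$. The one place that needs genuine care in the writeup is the claim that the least index at which $v(\tilde a_j)$ attains its minimum is an actual \emph{vertex} of $\NP_v(\tilde f)$ --- so that the flat bottom edge begins exactly there and hence has length $\leq u$. This holds because that point sits at the global minimum height and so cannot lie in the interior of an edge of negative, zero, or positive slope, but it is the step where a careless argument would most plausibly slip.
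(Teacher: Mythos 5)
The paper offers no proof of this statement: it is quoted verbatim as Corollary 2.7 of Bush--Hajir \cite{bh}, so there is no internal argument to compare against. Your proof is correct, and it is in substance the argument of the cited source: the two engines --- additivity of the slope multiset of $\NP_v$ under multiplication in $K_v[x]$, and the integrality observation that a factor of degree $d<\mu^{-1}$ whose slopes all lie in $[-\mu,0]$ has total valuation drop in $[0,d\mu)\cap\Z=\{0\}$, hence is forced to be flat and so has degree at most $\lambda_v(\tilde f)\le u$ --- are exactly what drives the Bush--Hajir lemma. The supporting steps you single out are handled correctly: an admissible deformation fixes both endpoints of the polygon and can only raise interior points, so it cannot steepen the left-most slope past $-\mu$; and the left endpoint of the flat edge is a vertex, hence one of the input points $(j,v(\tilde a_j))$ with $v(\tilde a_j)=0$, forcing $j\ge n-u$. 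Two things to make explicit in a final writeup: you tacitly assume $a_0\neq 0$ (so that $\tilde a_0\neq 0$ and the polygons of $g$ and $h$ genuinely span their full degrees); this is part of the standing conventions in \cite{bh} and is satisfied in every application in this paper, but it should be stated, since for $u\le n-1$ the hypothesis $v(a_0)>0$ alone does not literally rule out $a_0=0$. Second, the additivity of slope multisets is the one place where nontrivial input (the factorization of Newton polygons over the complete field $K_v$, equivalently the valuations of roots in $\overline{K_v}$) enters, so it merits an explicit citation rather than being treated as free.
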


Roughly speaking, polynomials that are ``nearly'' $v$-Eisenstein have a restricted range of degrees of their irreducible factors. We now apply Theorem \ref{bhthm} to a special case of the diagonal approximants.

\begin{prop}
Let $p$ be an odd prime number.  Then 
\begin{enumerate}
\item the polynomials $P(p+1,p+2,x)$, $Q(p,p+1,x)$ are either irreducible over $\Q$, or factor into a linear and an irreducible degree-$p$ factor, and 
\item  the latter case occurs if and only if there exists an integer $m$ such that
\begin{align*}
P(p+1,p+2,-3 + pm) &=0, \\
Q(p,p+1,-1+pm) &= 0,
\end{align*}
respectively.
\end{enumerate}
\end{prop}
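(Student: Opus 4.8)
\emph{Proof strategy.} The plan is to localize at the prime $p$ and read both assertions off an explicit description of the $p$-adic Newton polygon of the two polynomials. Write $f(x)=\sum_{j=0}^{p+1}a_jx^j$ for $f=P(p+1,p+2,x)$, respectively $f=Q(p,p+1,x)$; in either case $f$ is monic in $\Z[x]$ of degree $p+1$, so any rational root of $f$ is a rational integer. The first and main step is to compute $v_p(a_j)$ for every $j$. For $P(p+1,p+2,x)$ one has $a_j=\binom{p+1}{j}(p+3)(p+4)\cdots(2p+3-j)$; Kummer's theorem gives $v_p\binom{p+1}{j}=1$ for $2\le j\le p-1$ and $v_p\binom{p+1}{j}=0$ for $j\in\{0,1,p,p+1\}$, while the product $(p+3)\cdots(2p+3-j)$ contains exactly one multiple of $p$ (namely $2p$) for $j\le 3$ and none for $j\ge 4$, so $v_p(a_j)\ge 1$ for $0\le j\le p-1$ and $v_p(a_p)=v_p(a_{p+1})=0$; an entirely parallel count with the factorials and binomials occurring in $Q(p,p+1,x)$ yields the same conclusion. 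I would then deduce that $\NP_p(f)$ consists of a segment of horizontal length $p$ whose slope has denominator exactly $p$, followed by a slope-$0$ segment of horizontal length $1$; the slope of the long segment is $-1/p$ in every case save $P(4,5,x)$ at $p=3$, where $v_3(a_0)=3$ forces slopes $-2$ and $-1/3$, the long segment being the one of slope $-1/3$.

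Granting this, part (1) is formal. By the Main Theorem of Newton Polygons, over $\Q_p$ we get $f=g_1g_2$ with $\deg g_1=p$, all roots of $g_1$ sharing a valuation $c/p$ with $\gcd(c,p)=1$, and $\deg g_2=1$ ($f$ is separable, since $\disc f\ne 0$ by Proposition~\ref{discprop}). If $h$ were a proper monic factor of $g_1$ in $\Q_p[x]$ of degree $e$ with $0<e<p$, then $v_p(h(0))=ec/p$ would fail to be an integer, which is impossible; hence $g_1$ is irreducible over $\Q_p$. Since any factorization of $f$ over $\Q$ refines its $\Q_p$-factorization $g_1g_2$ and both $g_1,g_2$ are $\Q_p$-irreducible, $f$ is either irreducible over $\Q$ or the product of a linear polynomial and a degree-$p$ polynomial, the latter equal to $g_1$ and hence irreducible over $\Q$. (For $p\ge 5$ one may instead obtain the absence of a factor of degree in $(1,p)$ directly from Theorem~\ref{bhthm} with $u=1$, since the left-most slope is $-1/p$; for $p=3$ the steepness $2$ makes that criterion vacuous, so the direct Newton-polygon argument is what is used there.)

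For part (2), the linear-times-degree-$p$ factorization occurs precisely when $f$ has a rational root $a$, necessarily an integer; since the roots of $g_1$ have non-integral valuation $c/p$, such an $a$ must be the root of $g_2$, so $v_p(a)=0$ (and $v_3(a)=2$ for $P(4,5,x)$). Reducing $f$ modulo $p$, which one reads off the two top coefficients $a_p,a_{p+1}$, gives $P(p+1,p+2,x)\equiv x^p(x+3)$ and $Q(p,p+1,x)\equiv x^p(x-1)\pmod p$; as $p\nmid a$, the residue of $a$ is pinned down to $-3\pmod p$ in the first case and $1\pmod p$ in the second, that is, $a$ has the form $-3+pm$, respectively $1+pm$, for some $m\in\Z$. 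Conversely, any integer $a$ of this shape with $f(a)=0$ displays $f$ as a linear factor times a degree-$p$ factor, which is irreducible by part~(1).

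The only genuinely computational part of all this is the valuation count of the first paragraph together with verifying that the vertices of $\NP_p(f)$ are exactly those listed; the hard part, such as it is, is simply carrying this out carefully. The one place where the generic picture is disturbed is the prime $p=3$ in the $P$-family (where $2p+3=p^2$), which is also the single case in which the criterion of Theorem~\ref{bhthm} is vacuous and the argument must rely on the explicit shape of the Newton polygon.
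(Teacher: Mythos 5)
Your argument is correct and follows the same route as the paper's proof --- $p$-adic Newton polygons to control the possible factorization types, then the mod-$p$ factorization together with Hensel's lemma and Gauss's lemma to pin down the residue of a putative rational root --- but you are more careful in two places where the paper actually slips. First, the paper invokes Theorem \ref{bhthm} with $\mu=1/p$ uniformly; as you observe, this fails for $P(4,5,x)$ at $p=3$, where $2p+3=p^2$ gives $v_3(a_0)=3$, the left-most slope is $-2$, and the interval $(1,\mu^{-1})=(1,1/2)$ is empty, so the corollary is vacuous there. Your direct reading of the polygon (a horizontal-length-$p$ segment whose slope has denominator exactly $p$ forces a $\Q_p$-irreducible factor of degree $p$, leaving only a linear cofactor) covers this case and is the right fix; it also makes the appeal to Theorem \ref{bhthm} unnecessary for every $p$. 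Second, your residue computation for $Q(p,p+1,x)$ gives $+1\pmod p$ rather than $-1$: indeed $Q(p,p+1,x)\equiv x^{p}(x-1)\pmod p$ (for instance $Q(3,4,x)=x^4-16x^3+120x^2-480x+840\equiv x^3(x-1)\pmod 3$), so the ``$-1+pm$'' in the statement is a sign error, just as the ``$\alpha\equiv -2\pmod p$'' in the paper's proof of the $P$-case should read $\alpha\equiv -3\pmod p$ (which does match the statement and your computation). In short: where your write-up diverges from the paper, it is the paper, not your proof, that needs correcting; the only thing to tidy in your version is the phrasing ``as $p\nmid a$'' in the $P$-case at $p=3$, where in fact $v_3(a)=2$, as your parenthetical already notes.
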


\begin{proof}
We focus on the case of $P(p+1,p+2,x)$, with the details for $Q(p,p+1,x)$ being similar.  For (1), we observe that  $P(p+1,p+2,x)$ is integral and monic, and that the quantities ``$u$'' and ``$\mu$'' of Theorem \ref{bhthm} are given by 
$$
u=1,\  \mu = 1/p.
$$
By Theorem \ref{bhthm}, $P(p+1,p+2,x)$ does not have a $\Q_p$-factor with degree belonging to $(1,p)$.  Hence, either $P(p+1,p+2,x)$ is irreducible over $\Q_p$, or factors into a linear and irreducible degree-$p$ factor over $\Q_p$ (and hence over $\Q$).

For (2), factoring $P(p+1,p+2,x)$ mod $p$ gives 
$$
P(p+1,p+2,x) \equiv x^{p}P(1,2,x) \equiv x^{p}(x+3) \pmod{p}.
$$ 
By Hensel's Lemma,  $P(p+1,p+2,x)$ has a $\Q_p$-root $\alpha$, such that $\alpha \equiv -2 \pmod{p}$.
By Gauss' Lemma, since $P(p+1,p+2,x)$ is monic and integral, if it has a rational root, it has an integral root.  

Combining these observations, we see that either $P(p+1,p+1,x)$ is irreducible over $\Q$, or factors over $\Q$ into a linear and an irreducible degree $p$ factor, with the root of the linear factor $\equiv -2 \pmod{p}$.
\end{proof}

\begin{rmk}
We remark that for $n >1$ the polynomials $P(np+1,np+2,x)$ and $Q(np,np+1,x)$ have interesting Newton Polygons from the point of view of Theorem \ref{bhthm}, but with more possibilities for factorizations due to the more complicated value of $\mu$. 
\end{rmk}

\end{document}